\numberwithin{equation}{section}
\theoremstyle{theorem}
\newtheorem{thm}{Theorem}[section]
\newtheorem{lem}[thm]{Lemma}
\newtheorem{cor}[thm]{Corollary}
\theoremstyle{theorem}
\newtheorem{defn}{Definition}
\newtheorem{rem}{Remark}
\def\R{{\mathbb R}}
\def\N{{\mathbb N}}
\def\L{{\mathcal L}}
\def\Z{{\mathbb Z}}
\def\E{{\mathbb E}}
\def\sgn{{\rm sgn}}
\def\supp{{\rm supp}}
\def\A{{\cal{A}}}
\def\H{{\cal{H}}}
\def\d{{\rm d}}
\def\l{{\langle}}
\def\r{{\rangle}}
\begin{document}
\bibliographystyle{elsarticle-num}
\title{Stochastic Representations for the Wave Equation on Graphs and their Scaling Limits}
\author{Kaizheng Wang}
\ead{kaizheng@princeton.edu}
\address{Department of Operations Research and Financial Engineering, Princeton University, Sherrerd Hall, Charlton Street, Princeton, NJ 08544, USA}
\begin{abstract}
This paper is devoted to an interacting particle system that provides probabilistic interpretation of the wave equation on graphs. A Feynman-Kac-type formula is established, connecting the expectation of the process with the wave equation on graphs. Non-asymptotic $L^2$ estimates are presented. It is then shown that the high-density hydrodynamic limit of the system is given by the wave equation in Euclidean space. The sharpness of scaling limit result is demonstrated by a phase transition phenomenon.
\end{abstract}
\begin{keyword}
Wave equation; Interacting particle system; Graph; Hydrodynamic limits; Phase transition
\end{keyword}
\maketitle

\section{Introduction}
Stochastic representations for solutions of elliptic and parabolic equations are well known since the earliest works by Kakutani\cite{kakutani1944}, Kac\cite{kac1949}, etc. They are constructed by Markov processes describing the random motion of a particle and thus also known as ``stochastic solutions". These two types of PDEs have been studied intensively and thoroughly. Yet there has not been much progress on stochastic solutions of linear hyperbolic PDEs. A valuable survey is Hersh\cite{Hersh}. Goldstein\cite{GOLDSTEIN} and Kac\cite{kac} were the first to construct stochastic solutions of the one-dimensional telegrapher's equation under some special initial conditions, using ``persistent random walk". More general results are derived later, see \cite{kaplan,Griego,orsingher,foong,janaswamy}. Since most of those constructions require the solution of the wave equation associated to the telegrapher's equation, they cannot deal with the wave equation itself.

Only recently are there advances in stochastic solutions of the wave equation. Dalang, Mueller and Tribe \cite{Dalang} used the formulae of solutions to wave equations to construct stochastic solutions in one to three dimentional Euclidean spaces. Bakhtin and Mueller \cite{BAKHTIN} defined ``stochastic cascades" to solve one-dimensional semilinear wave equation. Pal and Shkolnikov \cite{pal} defined ``intertwined diffusion processes" and showed their connections to the hyperbolic PDEs. Yet there is no explicit representation for solutions, and it cannot deal with initial value problems. Chatterjee \cite{Chatterjee} derived a family of functions in bounded domains that satisfy the wave equation, using Brownian motion and a Cauchy random variable. This is the first result about bounded domains, although it is still unable to handle prescribed initial and boundary data. Plyukhin \cite{plyukhin} analyzed the inability of single-particle motion, and defined a stochastic process describing the movements and transitions of a large number particles moving along positive and negative directions of the Cartesian axes, to use their distributions to solve equations including the wave equation. There is no rigorous analysis and initial-boundary value problems were not addressed, either. Probabilistic interpretations of the wave equation still need exploration.

On the other hand, interacting particle systems have been successfully used as models for many differential equations. Kurtz \cite{kurtz1970,kurtz1971} considered Markov population processes with finite types of individual, and established law of large numbers approximation and diffusion approximation of systems of finitely many ODEs. Then Kotelenez \cite{kotelenez1986}, Blount \cite{blount1991} and many others extended the results to parabolic PDEs by hydrodynamic limits. In parallel with those works, stochastic processes describing evolutions with infinite types of individual are also studied, see Eibeck and Wagner \cite{eibeck2003} and Barbour \cite{barbour2012}. They are related to PDEs or systems of infinitely many ODEs.

In this paper we start from the wave equation on graphs, which is a system of finitely or infinitely many second-order linear ODEs. It is an approximation of the wave equation in Euclidean spaces and arises from many physics and engineering studies including spring networks, LC circuits, etc. An interacting particle system is constructed as the probabilistic model for this. There are key features distinguishing it from most existing models for other equations. One is that the particles are located not only on the nodes, but also on the edges of the graph. This follows from physics interpretations of the problem. Besides, we have dimension-free estimates for the system, which does not depend on the number of vertices in the graph. Hence infinite graphs such as $\Z^d$ are easily analyzed. What is more, a phase transition phenomenon demonstrates the sharpness of the scaling result.

We use quadruple $G=(V,E,K,m)$ to denote a graph to be discussed throughout the paper, where $V$ and $E$ are sets of nodes and edges, respectively; $K=(k_{xy})_{x,y\in V}$ is the weight matrix, $k_{xy}=k_{yx}\geq 0$, and $k_{xy}=0$ if there is no edge between $x$ and $y$; $m=(m_x)_{x\in V}$ is a function on $V$ taking values in $\R_+$. We further suppose the graph is embedded in some Hilbert space $\H$, i.e. the nodes are elements in that space, where the inner product and norm are represented by ``$\cdot$" and ``$|\cdot|$", respectively. In this paper, $V$ is either finite or countable, and the edges are undirected. There can be at most one edge between any pair of nodes, and there is no self-edges.

\begin{defn}
	The  Dirichlet initial-boundary value problem (IBVP) of the wave equation on $G$ is
	\begin{equation}\label{Dirichlet0}
	\begin{cases}
	&m_x\frac{\d^2u}{\d t^2}(x,t)=\sum_{y\in V}k_{xy}[u(y,t)-u(x,t)],~(x,t)\in V_0\times[0,+\infty),\\
	&u(x,0)=\varphi(x),~\frac{\d u}{\d t}(x,0)=\psi(x),~x\in V;\\
	&u(x,t)=\varphi(x),~(x,t)\in V_1\times[0,+\infty).
	\end{cases}
	\end{equation}
	Here $V_0$ and $V_1$ are two disjoint subsets of $V$ and $V_0\cup V_1=V$. $\varphi$ and $\psi$ are real-valued functions on $V$, and $\psi|_{V_1}=0$.
\end{defn}

Our process is rather natural and easy to analyze. Thanks to linearity of this problem, the expectation of the process is directly linked to the wave equation through a Feynman-Kac-type formula. Under some regularity conditions, we can define an interacting particle system $\{f_t:t\geq 0\}$ whose states are functions on $V\cup E$. The initial state is determined by the initial and boundary data in (\ref{Dirichlet0}). Then Theorem \ref{ibvp} states that
\begin{equation}
u(x,t)=\varphi(x)+\frac{1}{m_x}\E_f\Big(\int_0^t f_s(x)\d s\Big)
\end{equation}
solves the Dirichlet IBVP (\ref{Dirichlet0}).

Besides, the ODE system exhibits ``conservation of energy" property, which naturally lead to $L^2$ estimates of the particle system's fluctuation and a submartingale property. Theorem \ref{periodicivp} shows that with proper scaling, the process converges to the solution of the wave equation in Euclidean space. Different limiting behaviors due to different scalings are also discussed.

The rest of this paper is organized as follows. As preliminaries, in Section \ref{preliminaries} we list basic definitions and results of the wave equation on graphs and the interacting particle system. In Section \ref{FeynmanKac} we show the Feynman-Kac formula for the graph case. Then in Section \ref{limits} we present limit theorems. Their proofs are in Sections \ref{proofLLN} and \ref{proofimportant}.

\section{Preliminaries}\label{preliminaries}
\subsection{The wave equation on graphs}
We first list some notations and regularity conditions for the graphs we study. For two nodes $x$ and $y$, we write $y\sim x$ if and only if there is an edge in $E$, denoted by $\l x,y\r$, between them. Let $e_{xy}=\frac{y-x}{|y-x|}$ be the unit vector pointing from $x$ to $y$. For any finite set $S$, $\# S$ and $|S|$ both refer to the number of elements in it. Assume
\begin{equation}\label{regularity}
\begin{split}
&d_0=\sup_{x\in V}\Big\{\#\{y:y\sim x\}\Big\}<\infty,~~d=\max\{d_0,2\},\\
&M=\max\Big\{\sup_{x\in V}\{m_x^{-1}\},\sup_{\l x,y\r\in E}\{k_{xy}\}\Big\}<\infty.
\end{split}
\end{equation}

\begin{defn}
	Let $F_V(G)=\R^V$ and
	\begin{equation}
	F_E(G)=\{v\in\H^E: \forall \l x,y\r\in E, \exists c\in\R~s.t.~v(\l x,y\r)=c(y-x)\}.
	\end{equation}
	Each $u\in F_V(G)$ is called a \textbf{scalar field} on $V$, while each $v\in F_E(G)$ a \textbf{vector field} on $E$. Let
	\begin{equation}
	F(G)=F_V(G)\times F_E(G)=\{(u,v):~u\in F_V(G),v\in F_E(G)\}.
	\end{equation}
	For $f\in F(G)$, define
	\begin{equation}
	\begin{split}
	&\supp (f)=\{x\in V:f(x)\neq 0\}\cup\{\l x,y\r\in E:f(\l x,y\r)\neq 0\},\\
	&\|f\|_{\alpha}=\Big(\sum_{x\in V}\frac{|f(x)|^{\alpha}}{m_x}+\sum_{\l x,y\r\in E}k_{xy}|f(\l x,y\r)|^{\alpha}\Big)^{1/\alpha},~1\leq\alpha<\infty,\\
	&\|f\|_{\infty}=\max\Big\{\sup_{x\in V}\{|f(x)|\},\sup_{\l x,y\r\in E}\{|f(\l x,y\r)|\}\Big\}.
	\end{split}
	\end{equation}
	Define
	\begin{equation}
	\begin{split}
	&F_0(G)=\{f\in F(G):~|\supp (f)|<\infty,~|f(x)|\in\Z,~\forall x\in V,~|f(\l x,y\r)|\in\Z,~\forall\l x,y\r\in E\},\\
	& L^{\alpha}(G)=\{f\in F(G):~\|f\|_{\alpha}<\infty\},~1\leq\alpha\leq\infty.
	\end{split}
	\end{equation}
	For $f,g\in L^2(G)$, define
	\begin{equation}
	[f,g]_G=\sum_{x\in V}\frac{f(x)g(x)}{m_x}+\sum_{\l x,y\r\in E}k_{xy}f(\l x,y\r)\cdot g(\l x,y\r).
	\end{equation}
\end{defn}

The following obvious lemma will be useful for us.

\begin{lem}\label{cauchy}
	$\forall f\in F(G)$, $\alpha\geq 1$,
	\begin{equation}
	\|f\|_{\alpha}^{\alpha}\leq \|f\|_1\cdot\|f\|_{\infty}^{\alpha-1}.
	\end{equation}
\end{lem}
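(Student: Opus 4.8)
The plan is to prove the inequality termwise, exploiting the fact that $\|f\|_{\infty}$ dominates every individual value $|f(x)|$ and $|f(\l x,y\r)|$. First I would note that, directly from the definition, $\|f\|_{\alpha}^{\alpha}$ is exactly the sum appearing inside the $\alpha$-th root,
\begin{equation}
\|f\|_{\alpha}^{\alpha}=\sum_{x\in V}\frac{|f(x)|^{\alpha}}{m_x}+\sum_{\l x,y\r\in E}k_{xy}|f(\l x,y\r)|^{\alpha}.
\end{equation}
The idea is then to split each exponent as $|f(x)|^{\alpha}=|f(x)|\cdot|f(x)|^{\alpha-1}$, and likewise for the edge values, and to replace the factor carrying exponent $\alpha-1$ by the uniform bound $\|f\|_{\infty}^{\alpha-1}$.

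The key step is the pointwise estimate. Since $\alpha\geq 1$, the map $t\mapsto t^{\alpha-1}$ is nondecreasing on $[0,\infty)$; combined with $|f(x)|\leq\|f\|_{\infty}$ for every $x\in V$ and $|f(\l x,y\r)|\leq\|f\|_{\infty}$ for every $\l x,y\r\in E$, this yields $|f(x)|^{\alpha-1}\leq\|f\|_{\infty}^{\alpha-1}$ and $|f(\l x,y\r)|^{\alpha-1}\leq\|f\|_{\infty}^{\alpha-1}$. Substituting these into each summand and factoring the common constant out of both sums gives
\begin{equation}
\|f\|_{\alpha}^{\alpha}\leq\|f\|_{\infty}^{\alpha-1}\Big(\sum_{x\in V}\frac{|f(x)|}{m_x}+\sum_{\l x,y\r\in E}k_{xy}|f(\l x,y\r)|\Big)=\|f\|_{\infty}^{\alpha-1}\cdot\|f\|_1,
\end{equation}
which is exactly the claim. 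Note that the node and edge contributions are handled independently, so no interaction between the two sums is needed.

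There is essentially no hard part here, which is why the statement is labelled obvious; a careful write-up only needs to dispose of degenerate cases. When $\alpha=1$ both sides reduce to $\|f\|_1$ under the convention $\|f\|_{\infty}^{0}=1$, so the inequality is an equality. When $\|f\|_{\infty}=+\infty$ or $\|f\|_1=+\infty$ the right-hand side is $+\infty$ and the bound is trivial. Finally, if $\|f\|_1=0$ then every weighted term $|f(x)|/m_x$ and $k_{xy}|f(\l x,y\r)|$ vanishes, so (using $m_x>0$) the corresponding terms with exponent $\alpha$ also vanish and $\|f\|_{\alpha}^{\alpha}=0$. The only genuine content is the monotonicity of $t\mapsto t^{\alpha-1}$ invoked above.
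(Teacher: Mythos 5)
Your proof is correct and is precisely the argument the paper has in mind: the paper states this lemma without proof, calling it obvious, and the termwise bound $|f(\xi)|^{\alpha}\leq |f(\xi)|\cdot\|f\|_{\infty}^{\alpha-1}$ applied separately to the node and edge sums is the intended one-line justification. Your handling of the degenerate cases ($\alpha=1$, infinite or vanishing norms) is a reasonable bonus but not needed beyond the convention $\|f\|_{\infty}^{0}=1$.
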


\begin{defn}
	For $x\in V$, $\l x,y\r\in E$ and $\xi\in V\cup E$, define
	\begin{equation}
	\begin{split}
	&\delta_x(\xi)=
	\begin{cases}
	&1,~\xi=x,\\
	&0,~otherwise,
	\end{cases}
	,~~\delta_{xy}(\xi)=
	\begin{cases}
	&e_{xy},~\xi=\l x,y\r,\\
	&0,~otherwise,
	\end{cases}
	\end{split}
	\end{equation}
	\begin{equation}
	\hat{\delta}_x(\xi)=
	\begin{cases}
	&1,~\xi=x~and~x\in V_0,\\
	&0,~otherwise.
	\end{cases}
	\end{equation}
\end{defn}
Obviously $F_0(G)$ is countable and $\delta_x, \delta_{xy}, \hat{\delta}_x\in F_0(G)$.

\begin{defn}
	Define $\L_G:F(G)\rightarrow F(G)$. For $f\in F(G)$,
	\begin{equation}
	\begin{split}
	&\L_G f(x)=\sum_{y\sim x}k_{xy}e_{xy}\cdot f(\l x,y\r),~x\in V,\\
	&\L_G f(\l x,y\r)=\Big(\frac{f(y)}{m_y}-\frac{f(x)}{m_x}\Big)e_{xy},~\l x,y\r\in E.
	\end{split}
	\end{equation}
\end{defn}

It is easily seen that $[\cdot,\cdot]_G$ defines an inner product in $L^2(G)$, and $\L_G$ is a skew-symmetric linear operator since $[f, \L_G g]_G=-[\L_G f,g]_G,~\forall f,g\in L^2(G)$, which further yields
$[f, \L_G f]_G=0,~\forall f\in L^2(G)$. Besides, direct computation yields
\begin{equation}\label{laplacian}
\L_G^2 f(x)\triangleq\Big(\L_G(\L_G f)\Big)(x)=\sum_{y\sim x}k_{xy}\Big(\frac{f(y)}{m_y}-\frac{f(x)}{m_x}\Big),~x\in V.\\
\end{equation}
Hence $\L_G^2$ can define a discrete Laplace operator on the functions with $V$ being the domain.

\begin{rem}\label{physics}
	The solution of the IBVP (\ref{Dirichlet0}) is a time-varying scalar field. An example for the wave equation on graphs is the spring network, where the nodes are balls connected by springs as edges. $u(x,t)$ in (\ref{Dirichlet0}) is the displacement of node $x$ at time $t$. Then $m_x \frac{\d}{\d t}u(x,t)$ and $[u(y,t)-u(x,t)]e_{xy}$ represent the momentum of $x$ and the directed deformation of $\l x,y\r$. This physics interpretation of the wave equation lead us to define a function $v$:
	\begin{equation}
	\begin{cases}
	&v(x,t)=m_x\frac{\d u}{\d t}(x,t),~x\in V,\\
	&v(\l x,y\r,t)=[u(y,t)-u(x,t)]e_{xy},~\l x,y\r\in E.
	\end{cases}
	\end{equation}
\end{rem}

We have $v(\cdot,t)\in F(G)$ for all $t$. The IBVP (\ref{Dirichlet0}) is now rewritten as:
\begin{equation}\label{Dirichlet1}
\begin{cases}
&\frac{\d}{\d t}v(\xi,t)=\L_G v(\xi,t),~(\xi,t) \in (V_0\cup E)\times[0,+\infty),\\
&v(\xi,0)=\zeta(\xi),~\xi\in V\cup E,\\
&v(x,t)=0,~x\in V_1\times[0,+\infty).\\
\end{cases}
\end{equation}
where $\zeta\in F(G)$ is defined by
\begin{equation}
\begin{cases}
&\zeta(x)=m_x\psi(x),~x\in V,\\
&\zeta(\l x,y\r)=[\varphi(y)-\varphi(x)]e_{xy},~\l x,y\r\in E.\\
\end{cases}
\end{equation}
This is the initial-boundary value problem for a linear system of first-order ordinary differential equations.

\subsection{The interacting particle system}
The definition of the interacting particle system is guided by the physics interpretation in Remark \ref{physics}.
The key quantities of the spring network system are the momentum of balls and deformation of springs. The rate of change of a ball's momentum is determined by the deformation of springs attached to it. The rate of change of a spring's deformation, in turn, is determined by the velocity of the two balls it attaches to. In other words, a ball can only affect the springs attached to it, and a spring only affects the two balls it attaches to. Direct contact is the sufficient and necessary condition for interaction, and there is no interaction between any pair of balls or springs. The following interacting particle system naturally captures this mechanism.

\begin{defn}\label{defn}
	The interacting particle system (IPS) $\{f_t:t\geq 0\}$ with state space $F_0(G)$ is defined through its infinitesimal generator $\A$. For $\phi:F_0(G)\rightarrow \R$,
	\begin{equation}
	\begin{split}
	&\A \phi(f)=
	\sum_{x\in V}\frac{|f(x)|}{m_x}\Big\{\phi\Big(f+\sgn(f(x))\sum_{y\sim x}\delta_{yx}\Big)-\phi(f)\Big\}\\
	&+\sum_{\l x,y\r\in E}k_{xy}|f(\l x,y\r)|\Big\{\phi\Big(f+\sgn(f(\l x,y\r)\cdot e_{xy})(\hat{\delta}_x-\hat{\delta}_y)\Big)-\phi(f)\Big\}.
	\end{split}
	\end{equation}
\end{defn}

Intuitively, given the current state $f$, the possible transitions and their rates are
\begin{equation}
\begin{split}
&f\rightarrow f+\sgn(f(x))\sum_{y\sim x}\delta_{yx}
~~\text{at rate}~~\frac{|f(x)|}{m_x},\\
&f\rightarrow f+\sgn(f(\l x,y\r)\cdot e_{xy})(\hat{\delta}_x-\hat{\delta}_y)
~~\text{at rate}~~k_{xy}|f(\l x,y\r)|.
\end{split}
\end{equation}

Let $\tau_n$ be the time of the $n$-th jump of our IPS, $\tau_0=0$, and $\xi_n=\tau_n-\tau_{n-1}$ be the inter-arrival time, $n\in\Z_+$. Define
$h_n=f_{\tau_n},~n\in\N$ as the skeleton process, $\eta_t=\sup\{n:\tau_n\leq t\},~t\in\R_+$ as the number of jumps occurred.

For $f\in F_0(G)$, $P_f(\cdot)$ and $\E_f(\cdot)$ denote the conditional probability and expectation given $f_0=f$, respectively. We will see later in Theorem \ref{nonexplosive} that the IPS is non-explosive (i.e. making finite jumps in $[0,t]$ almost surely, $\forall t>0$) and everything is well-defined.

\subsection{Preliminary results}
\begin{lem}\label{L1prior}
	Recall the definition of $M$ and $d$ in (\ref{regularity}). Given $f_0=f$, we have $\|h_n\|_{\infty}\leq n+\|f\|_{\infty}$ and $\|h_n\|_1\leq nMd+\|f\|_1$, a.s.. As a result,
	\begin{equation}
	\begin{split}
	&\|f_t\|_1\leq Md\eta_t+\|f\|_1,\\
	&\|f_t\|_{\infty}\leq \eta_t+\|f\|_{\infty}.\\
	\end{split}
	\end{equation}
\end{lem}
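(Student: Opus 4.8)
The plan is to prove the two bounds for the skeleton process $h_n$ by tracking how a single jump changes each norm, and then to read off the bounds for $f_t$ from the identity $f_t=h_{\eta_t}$. First I would record the effect of one transition. A node-type jump at $x$ replaces $f$ by $f+\sgn(f(x))\sum_{y\sim x}\delta_{yx}$; this leaves every scalar value on $V$ untouched and alters only the edge values $f(\l x,y\r)$ with $y\sim x$, each by the addition of a unit vector $\sgn(f(x))e_{yx}$. An edge-type jump at $\l x,y\r$ replaces $f$ by $f+\sgn(f(\l x,y\r)\cdot e_{xy})(\hat{\delta}_x-\hat{\delta}_y)$; this leaves every edge value untouched and changes only the two scalar values $f(x),f(y)$ (and only when the relevant node lies in $V_0$), each by $\pm 1$.

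For the $\infty$-norm, the key observation is that in either transition every component that changes does so by a quantity of magnitude exactly $1$ (a unit vector $e_{yx}$ on an edge, or $\pm 1$ on a node). By the triangle inequality each changed component satisfies $|f_{\mathrm{new}}(\xi)|\le|f_{\mathrm{old}}(\xi)|+1\le\|h_{n-1}\|_{\infty}+1$, while unchanged components stay bounded by $\|h_{n-1}\|_{\infty}$. Hence $\|h_n\|_{\infty}\le\|h_{n-1}\|_{\infty}+1$, and induction starting from $h_0=f$ gives $\|h_n\|_{\infty}\le n+\|f\|_{\infty}$.

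For the $1$-norm I would bound the increment of each transition using (\ref{regularity}). In a node-type jump at $x$ only the edge terms change, and the total increment is at most $\sum_{y\sim x}k_{xy}\bigl(|f_{\mathrm{new}}(\l x,y\r)|-|f_{\mathrm{old}}(\l x,y\r)|\bigr)\le\sum_{y\sim x}k_{xy}\le d_0 M\le dM$, since $x$ has at most $d_0$ incident edges and $k_{xy}\le M$. In an edge-type jump at $\l x,y\r$ only the two scalar terms change, and the increment is at most $m_x^{-1}+m_y^{-1}\le 2M\le dM$, the last step using $d=\max\{d_0,2\}\ge 2$. Thus every jump raises $\|\cdot\|_1$ by at most $Md$, and induction yields $\|h_n\|_1\le nMd+\|f\|_1$.

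Finally, since $f_t=f_{\tau_{\eta_t}}=h_{\eta_t}$ by the definition of $\eta_t$, substituting $n=\eta_t$ into the two skeleton bounds gives $\|f_t\|_{\infty}\le\eta_t+\|f\|_{\infty}$ and $\|f_t\|_1\le Md\eta_t+\|f\|_1$. I expect the only delicate point to be the bookkeeping for the vector-valued edge increments: confirming that adding $\sgn(f(x))e_{yx}$ changes $|f(\l x,y\r)|$ by at most $1$ (triangle inequality, together with the fact that each edge value remains parallel to $e_{xy}$) and correctly counting at most $d_0$ affected edges. Everything else is routine induction combined with the uniform estimates $m_x^{-1},k_{xy}\le M$.
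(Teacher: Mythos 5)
Your proof is correct and follows essentially the same route as the paper's: bound the change of $\|\cdot\|_\infty$ and $\|\cdot\|_1$ across a single jump, then apply induction along the skeleton $h_n$ and substitute $n=\eta_t$ via $f_t=h_{\eta_t}$. In fact your version is more explicit than the paper's one-line argument, since you spell out the point the paper leaves implicit — that a node-type jump touches at most $d_0$ edge components (giving increment $\le d_0 M\le dM$) while an edge-type jump touches at most $2$ node components (giving increment $\le 2M\le dM$, which is exactly where $d=\max\{d_0,2\}$ is needed).
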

\begin{proof}
	From the definition of our process $\{f_t\}$ we see that $|h_{n+1}(x)-h_n(x)|\leq 1$ and $|h_{n+1}(\l x,y\r)-h_n(\l x,y\r)|\leq 1$, which imply $\|h_{n+1}\|_1-\|h_n\|_1\leq Md$ and $\|h_{n+1}\|_{\infty}-\|h_n\|_{\infty}\leq 1$, a.s.. Induction can be applied to prove the argument.
\end{proof}

For $\lambda>0$, $U(\lambda)$ is an exponentially distributed random variable with rate $\lambda$. For two random variables $X$ and $Y$ supported on $[0,+\infty)$, we write $X\succ Y$ if and only if
\begin{equation}
P(X>t)\geq P(Y>t),~\forall t\geq 0.
\end{equation}
The relationship $X\succ Y$ is often written as ``$X$ is stochastically larger than $Y$" in the literature. Simple propositions in stochastic dominance will be needed to derive useful estimates on random quantities. Their proofs are straightforward and thus omitted.
\begin{lem}\label{sumsucc}
	1.~~$X\succ Y\Rightarrow \E X\geq\E Y$; $\lambda\leq \eta\Leftrightarrow U(\lambda)\succ U(\eta)$.
	
	2.~~Suppose we have two sequences of independent random variables supported on $[0,+\infty)$, $\{X_n\}_{n=1}^{\infty}$ and $\{Y_n\}_{n=1}^{\infty}$. If $X_n\succ Y_n$ holds for all n, then
	\begin{equation}\label{succN}
	\sum_{n=1}^NX_n\succ \sum_{n=1}^NY_n,~\forall N\in\Z_+;~~\sum_{n=1}^{\infty}X_n\succ \sum_{n=1}^{\infty}Y_n.
	\end{equation}
\end{lem}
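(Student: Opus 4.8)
The plan is to treat the two parts separately, relying in each case on the tail-integral representation of expectation and on the quantile coupling that characterizes stochastic dominance.

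For Part 1, the first claim follows from the layer-cake identity $\E X=\int_0^\infty P(X>t)\,\d t$, valid for any nonnegative random variable. Since $X\succ Y$ gives $P(X>t)\geq P(Y>t)$ pointwise in $t$, integrating over $[0,\infty)$ yields $\E X\geq\E Y$ at once. For the second claim I would simply compare tails: $P(U(\lambda)>t)=e^{-\lambda t}$ and $P(U(\eta)>t)=e^{-\eta t}$, so $U(\lambda)\succ U(\eta)$ is equivalent to $e^{-\lambda t}\geq e^{-\eta t}$ for all $t\geq 0$, which (taking logarithms for $t>0$) is equivalent to $\lambda\leq\eta$.

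For Part 2, the key tool is the standard fact that $X\succ Y$ holds if and only if there is a coupling $(\tilde X,\tilde Y)$ on a common probability space with $\tilde X\stackrel{d}{=}X$, $\tilde Y\stackrel{d}{=}Y$, and $\tilde X\geq\tilde Y$ almost surely. Concretely, I would take a single uniform random variable $U$ and set $\tilde X=F_X^{-1}(U)$, $\tilde Y=F_Y^{-1}(U)$, using the generalized inverse CDFs (quantile functions). The tail inequality $P(X>t)\geq P(Y>t)$ is exactly $F_X\leq F_Y$, and since $\{t:F_X(t)\geq u\}\subseteq\{t:F_Y(t)\geq u\}$ this forces $F_X^{-1}\geq F_Y^{-1}$, giving $\tilde X\geq\tilde Y$ a.s. Performing this construction independently for each index $n$ produces independent pairs $(\tilde X_n,\tilde Y_n)$ with $\tilde X_n\geq\tilde Y_n$ a.s., whence $\sum_{n=1}^N\tilde X_n\geq\sum_{n=1}^N\tilde Y_n$ a.s. Because almost sure dominance trivially implies stochastic dominance, and because the coupled sums have the same laws as $\sum_{n=1}^N X_n$ and $\sum_{n=1}^N Y_n$ (independence of the $\tilde X_n$, and of the $\tilde Y_n$, is preserved by the independent coupling), the finite-sum statement follows.

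The infinite-sum case is the only place where a little care is needed, and is the main (though minor) obstacle. Since all variables are supported on $[0,\infty)$, the coupled partial sums are nondecreasing in $N$, so $\sum_{n=1}^\infty\tilde X_n$ and $\sum_{n=1}^\infty\tilde Y_n$ are well-defined in $[0,+\infty]$ and still satisfy $\sum_{n=1}^\infty\tilde X_n\geq\sum_{n=1}^\infty\tilde Y_n$ a.s. Almost sure dominance then yields $\sum_{n=1}^\infty X_n\succ\sum_{n=1}^\infty Y_n$; equivalently, one may pass to the limit in the finite-sum tail inequalities using continuity of probability along the increasing events $\{\sum_{n=1}^N\tilde X_n>t\}\uparrow\{\sum_{n=1}^\infty\tilde X_n>t\}$.
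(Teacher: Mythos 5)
Your proposal is correct, and there is nothing in the paper to compare it against: the authors explicitly omit the proof of this lemma (``Their proofs are straightforward and thus omitted''), so your argument fills in a gap rather than paralleling an existing one. Both parts check out: the layer-cake identity $\E X=\int_0^\infty P(X>t)\,\d t$ handles the expectation claim (including the case of infinite expectations), the tail comparison $e^{-\lambda t}\geq e^{-\eta t}\Leftrightarrow\lambda\leq\eta$ handles the exponential claim, and the quantile coupling $\tilde X_n=F_{X_n}^{-1}(U_n)\geq F_{Y_n}^{-1}(U_n)=\tilde Y_n$ with independent uniforms correctly reduces both sum statements to almost-sure dominance; your treatment of the infinite sum via the increasing events $\{\sum_{n=1}^N\tilde X_n>t\}\uparrow\{\sum_{n=1}^\infty\tilde X_n>t\}$ is exactly the care the limit requires. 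One remark on alternatives: the ``straightforward'' proof the authors likely had in mind avoids coupling altogether and instead proves the two-term case by conditioning,
\begin{equation}
P(X_1+X_2>t)=\int P(X_1>t-s)\,\d F_{X_2}(s)\geq\int P(Y_1>t-s)\,\d F_{X_2}(s)=P(Y_1+X_2>t)\geq P(Y_1+Y_2>t),
\end{equation}
followed by induction on $N$ and monotone convergence for the infinite sum. That route is more elementary (no quantile functions, no coupling theorem), while yours is more conceptual and generalizes immediately to dominance statements for arbitrary monotone functionals of the sequence, not just sums; either is a complete proof of the lemma as stated.
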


We obtain some simple estimates on our IPS by comparing it to the Yule process defined in \cite{yule}. Here we present some of its properties that will be used later. They can be found in Yule's original paper \cite{yule} and thus we omit their proofs.
\begin{defn}
	The Yule process parameterized by $\lambda$ and $r$, $\{X_t(\lambda,r):t\geq 0\}$,  is a pure birth process starting from $r\in\Z_+$ with jumps from $n$ to $(n+1)$ at rate $n\lambda$,~$\forall n\geq r$. Let $\tilde{\tau}_0(\lambda,r)=0$, $\tilde{\tau}_n(\lambda,r)$ be the time of its $n$-th jump, $\tilde{\xi}_n(\lambda,r)=\tilde{\tau}_n(\lambda,r)-\tilde{\tau}_{n-1}(\lambda,r)$ be the inter-arrival times, and $\tilde{\eta}_t(\lambda,r)=\sup\{n:\tilde{\tau}_n(\lambda,r)\leq t\}$ be the number of jumps occurred.
\end{defn}

By definition,
\begin{equation}
\begin{split}
&X_{\tilde{\tau}_n(\lambda,r)}(\lambda,r)=n+r,\\
&\tilde{\xi}_n(\lambda,r)\overset{d}{=}U\Big((n+r-1)\lambda\Big).\\
\end{split}
\end{equation}

\begin{lem}\label{yule}
	Yule process is non-explosive. Furthermore, we have
	\begin{equation}
	\begin{split}
	&\E \tilde{\eta}_t(\lambda,r)=r e^{\lambda t}, ~t>0,\\
	&\E \tilde{\eta}_t^{\alpha}(\lambda,r)<\infty,~\forall\alpha>0,~t>0.\\
	\end{split}
	\end{equation}
\end{lem}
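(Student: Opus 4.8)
The plan is to reduce all three assertions to elementary facts about the sum of independent exponential inter-arrival times and the explicit law of $X_t(\lambda,r)$. Throughout I would write $\mu_n=(n+r-1)\lambda$, so that the $\tilde{\xi}_n(\lambda,r)\overset{d}{=}U(\mu_n)$ are independent, and let $T=\sum_{n=1}^{\infty}\tilde{\xi}_n(\lambda,r)$ denote the explosion time; the three parts (non-explosiveness, the mean, the higher moments) then correspond to three properties of $T$ and of $X_t(\lambda,r)$ respectively.

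For non-explosiveness I would prove $T=\infty$ almost surely. Using independence and the Laplace transform of an exponential,
\begin{equation}
\E\big[e^{-T}\big]=\prod_{n=1}^{\infty}\E\big[e^{-\tilde{\xi}_n(\lambda,r)}\big]=\prod_{n=1}^{\infty}\frac{\mu_n}{\mu_n+1}=\prod_{n=1}^{\infty}\big(1+\mu_n^{-1}\big)^{-1}.
\end{equation}
Since $\sum_{n\geq 1}\mu_n^{-1}=\frac{1}{\lambda}\sum_{m\geq r}m^{-1}=\infty$ and $\log(1+x)\sim x$, the infinite product vanishes, so $\E[e^{-T}]=0$ and hence $T=\infty$ a.s. This is precisely non-explosiveness and guarantees $\tilde{\eta}_t(\lambda,r)<\infty$ for every $t$.

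For the mean, observe $X_t(\lambda,r)=\tilde{\eta}_t(\lambda,r)+r$, so it suffices to compute $m(t)=\E X_t(\lambda,r)$. Writing $p_n(t)=\P(X_t(\lambda,r)=n)$ for $n\geq r$, the Kolmogorov forward equations for the birth rates $n\lambda$ read $p_n'=-n\lambda p_n+(n-1)\lambda p_{n-1}$; multiplying by $n$, summing over $n\geq r$, and reindexing the second sum collapses the quadratic terms and yields $m'(t)=\lambda m(t)$ with $m(0)=r$, hence $m(t)=re^{\lambda t}$, from which the asserted expectation for $\tilde{\eta}_t(\lambda,r)$ follows. For the finiteness of all moments I would instead exploit the probability generating function $G(s,t)=\E[s^{X_t(\lambda,r)}]$, which solves the transport equation $\partial_t G=\lambda s(s-1)\partial_s G$ with $G(s,0)=s^r$; solving by characteristics gives the negative-binomial form
\begin{equation}
G(s,t)=\left(\frac{s\,e^{-\lambda t}}{1-s\,(1-e^{-\lambda t})}\right)^{r}.
\end{equation}
This is analytic in a neighborhood of $s=1$ (its pole sits at $s=(1-e^{-\lambda t})^{-1}>1$), so $X_t(\lambda,r)$ has a finite exponential moment $\E[e^{\theta X_t(\lambda,r)}]=G(e^{\theta},t)<\infty$ for small $\theta>0$; since a finite moment generating function near the origin forces every polynomial moment to be finite and $0\leq\tilde{\eta}_t(\lambda,r)\leq X_t(\lambda,r)$, we obtain $\E\tilde{\eta}_t^{\alpha}(\lambda,r)<\infty$ for all $\alpha>0$.

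The main obstacle I anticipate is not any single calculation but the rigorous handling of the infinite systems involved: upgrading non-explosiveness from an in-expectation divergence $\sum_n\E\tilde{\xi}_n=\infty$ to the almost-sure statement $T=\infty$, and justifying the term-by-term differentiation of the infinite forward equations together with the passage to the closed form of $G$. The Laplace-transform product in the first display sidesteps the former cleanly, and \emph{verifying} that the explicit $G$ solves the transport equation with the correct initial data (rather than deriving it from scratch) controls the latter, so the remaining work reduces to checking these closed forms rather than to a delicate convergence argument.
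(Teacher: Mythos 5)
The paper gives no proof of this lemma at all---it defers to Yule's original 1925 paper---so your self-contained argument fills a gap rather than paralleling an existing proof, and its three ingredients are standard and sound. The Laplace-transform step is clean: $\E[e^{-T}]=\prod_{n}\mu_n/(\mu_n+1)=0$ because $\sum_n\mu_n^{-1}=\infty$, hence $T=\infty$ a.s.; this is a genuinely rigorous upgrade of the ``in-expectation'' divergence argument the paper uses for the IPS itself in Theorem \ref{nonexplosive}, where divergence of $\E T$ alone would not suffice without the independence of the holding times. Likewise, your strategy of \emph{verifying} the negative-binomial generating function $G(s,t)$ against the transport equation, rather than deriving it by term-by-term manipulation of the infinite forward system, is the right way to control the interchange-of-limits issues you identify, and analyticity of $G$ near $s=1$ does give finite exponential (hence all polynomial) moments.

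There is one discrepancy, and you papered over it. Your computation gives $\E X_t(\lambda,r)=re^{\lambda t}$, and since $X_t(\lambda,r)=\tilde{\eta}_t(\lambda,r)+r$ this yields $\E\,\tilde{\eta}_t(\lambda,r)=r(e^{\lambda t}-1)$, \emph{not} the value $re^{\lambda t}$ asserted in the lemma; your closing claim that ``the asserted expectation follows'' is therefore not literally true. What you have actually (correctly) proved is that the lemma's equality is off by the additive constant $r$: it should read $\E\,\tilde{\eta}_t(\lambda,r)=r(e^{\lambda t}-1)$, or be weakened to the inequality $\E\,\tilde{\eta}_t(\lambda,r)\leq re^{\lambda t}$. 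This is an erratum in the paper's statement rather than a flaw in your mathematics, and it is harmless downstream, since the only places the formula is invoked (Theorem \ref{moments1} and the proof of Theorem \ref{LLN}) use exactly the upper bound $\E_f\eta_t\leq\E\,\tilde{\eta}_t(Md,r)\leq re^{Mdt}$. But a careful write-up should flag the mismatch explicitly instead of asserting agreement with the stated equality.
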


Now we come back to the IPS $\{f_t\}$.
\begin{lem}\label{xi_n}
	$\forall f\in F_0(G)$, conditional on $f_0=f$, we have
	\begin{equation}
	\xi_n\succ U((n-1)Md+\|f\|_1),~\forall t\geq 0,~\forall n\in\Z_+.
	\end{equation}
\end{lem}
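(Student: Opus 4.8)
The plan is to recognize that $\|f\|_1$ is precisely the total rate at which the IPS leaves state $f$, and then to combine the deterministic a priori bound of Lemma \ref{L1prior} with the elementary monotonicity of exponential holding times recorded in Lemma \ref{sumsucc}.

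First I would read off the total jump rate directly from the generator in Definition \ref{defn}. Summing the rates of all admissible transitions out of $f$ gives
\[
\sum_{x\in V}\frac{|f(x)|}{m_x}+\sum_{\langle x,y\rangle\in E}k_{xy}|f(\langle x,y\rangle)|,
\]
which is exactly $\|f\|_1$. Consequently, conditional on the skeleton value $h_{n-1}=g$, the holding time $\xi_n$ spent in $g$ before the $n$-th jump is exponentially distributed with rate $\|g\|_1$; that is, $\xi_n\mid\{h_{n-1}=g\}\overset{d}{=}U(\|g\|_1)$.

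Next I would invoke the a priori estimate. By Lemma \ref{L1prior}, conditional on $f_0=f$ we have $\|h_{n-1}\|_1\leq (n-1)Md+\|f\|_1$ almost surely. Since a smaller rate produces a stochastically larger exponential variable (Lemma \ref{sumsucc}, part~1), on the event $\{h_{n-1}=g\}$ we obtain $U(\|g\|_1)\succ U((n-1)Md+\|f\|_1)$ whenever $\|g\|_1\leq (n-1)Md+\|f\|_1$, which holds a.s. For $n=1$ this is the trivial equality $\xi_1\overset{d}{=}U(\|f\|_1)$, consistent with the asserted bound.

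The only point requiring care is that the comparison rate $\|h_{n-1}\|_1$ is itself random, whereas the target $U((n-1)Md+\|f\|_1)$ has a deterministic rate; the dominance must therefore be lifted from the conditional to the unconditional law. This is handled at the level of tails: for every $t\geq 0$,
\[
P(\xi_n>t\mid h_{n-1})=e^{-\|h_{n-1}\|_1 t}\geq e^{-((n-1)Md+\|f\|_1)t}=P\big(U((n-1)Md+\|f\|_1)>t\big)\quad\text{a.s.},
\]
and taking expectations over $h_{n-1}$ via the tower property yields $P(\xi_n>t)\geq P(U((n-1)Md+\|f\|_1)>t)$, i.e. $\xi_n\succ U((n-1)Md+\|f\|_1)$. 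I expect this conditioning step to be the only genuinely substantive part of the argument; the identification of the total jump rate with $\|f\|_1$ and the monotonicity of exponentials are both immediate.
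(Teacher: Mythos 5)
Your proposal is correct and follows essentially the same route as the paper: identify $\xi_n\overset{d}{=}U(\|h_{n-1}\|_1)$ from the generator, bound $\|h_{n-1}\|_1\leq(n-1)Md+\|f\|_1$ a.s.\ via Lemma \ref{L1prior}, and conclude by monotonicity of exponential tails. Your explicit tower-property step, lifting the dominance from the conditional law given $h_{n-1}$ to the unconditional law, is exactly the point the paper compresses into its citation of Lemma \ref{sumsucc}, so you have, if anything, made the argument slightly more airtight.
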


\begin{proof}
	From the definition of $\{f_t\}$ we see that $\xi_n\overset{d}{=}U(\|h_{n-1}\|_1)$. Given $f_0=f$, Lemma \ref{L1prior} tells us $\|h_{n-1}\|_1\leq (n-1)Md+\|f\|_1$, a.s.. Lemma \ref{sumsucc} yields $\xi_n\succ U((n-1)Md+\|f\|_1)$.
\end{proof}

Lemmas \ref{sumsucc} and \ref{xi_n} imply the following corollary.
\begin{cor}\label{cortau}
	For $f\in F_0(G)$ and $r\geq\frac{\|f\|_1}{Md}$,
	\begin{equation}
	\begin{split}
	&\xi_n\succ\tilde{\xi}_n(Md,r),\\
	&\tau_n=\sum_{k=1}^n\xi_k\succ \sum_{k=1}^n\tilde{\xi}_k(Md,r)=\tilde{\tau}_n(Md,r),\\
	&\tilde{\eta}_t(Md,r)\succ\eta_t.
	\end{split}
	\end{equation}
\end{cor}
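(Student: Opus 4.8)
The plan is to handle the three assertions in turn, reducing each to the stochastic-dominance facts of Lemma~\ref{sumsucc} together with the rate bounds of Lemmas~\ref{L1prior} and~\ref{xi_n}. Throughout I use that $\tilde\xi_n(Md,r)\overset{d}{=}U\big((n+r-1)Md\big)$ by definition of the Yule process, and that the hypothesis $r\geq\|f\|_1/(Md)$ is exactly $rMd\geq\|f\|_1$.

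First I would dispatch $\xi_n\succ\tilde\xi_n(Md,r)$. Lemma~\ref{xi_n} gives $\xi_n\succ U\big((n-1)Md+\|f\|_1\big)$. Since $rMd\geq\|f\|_1$, we have $(n-1)Md+\|f\|_1\leq(n-1)Md+rMd=(n+r-1)Md$, so the monotonicity statement $\lambda\leq\eta\Leftrightarrow U(\lambda)\succ U(\eta)$ in the first part of Lemma~\ref{sumsucc} yields $U\big((n-1)Md+\|f\|_1\big)\succ U\big((n+r-1)Md\big)\overset{d}{=}\tilde\xi_n(Md,r)$. As $\succ$ is transitive (being a pointwise inequality of tail functions), chaining the two dominances gives the claim.

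The main obstacle is the second assertion $\tau_n\succ\tilde\tau_n(Md,r)$, because the summation rule in the second part of Lemma~\ref{sumsucc} demands that each of the two families be \emph{independent}, whereas the inter-arrival times $\{\xi_k\}$ of the IPS are genuinely dependent: the rate $\|h_{k-1}\|_1$ of $\xi_k$ is a functional of the shared jump history. I would resolve this by conditioning on the skeleton $\{h_k\}_{k\geq0}$. Given the skeleton, the holding times $\{\xi_k\}$ are \emph{conditionally independent} with $\xi_k\overset{d}{=}U(\|h_{k-1}\|_1)$, and Lemma~\ref{L1prior} guarantees $\|h_{k-1}\|_1\leq(k-1)Md+\|f\|_1\leq(k+r-1)Md$ almost surely. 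Hence, conditionally, $\xi_k\succ U\big((k+r-1)Md\big)\overset{d}{=}\tilde\xi_k(Md,r)$ for every $k$, and since the $\tilde\xi_k(Md,r)$ are independent, the second part of Lemma~\ref{sumsucc} applies conditionally to give $\sum_{k=1}^n\xi_k\succ\sum_{k=1}^n\tilde\xi_k(Md,r)=\tilde\tau_n(Md,r)$. Because the law of $\tilde\tau_n(Md,r)$ does not depend on the skeleton, averaging this tail inequality over the skeleton removes the conditioning and preserves the dominance, yielding $\tau_n\succ\tilde\tau_n(Md,r)$. Equivalently, one may build both processes on a common family of rate-one clocks $\{E_k\}$ via $\xi_k=E_k/\|h_{k-1}\|_1$ and $\tilde\xi_k=E_k/\big((k+r-1)Md\big)$, so that $\xi_k\geq\tilde\xi_k$ and hence $\tau_n\geq\tilde\tau_n$ pathwise.

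Finally, for $\tilde\eta_t(Md,r)\succ\eta_t$ I would pass from jump times to counting functions through the elementary duality $\{\eta_t\geq n\}=\{\tau_n\leq t\}$, valid for each $n\in\Z_+$ by monotonicity of $\tau_n$ in $n$, and likewise for the Yule process. The second assertion gives $P(\tau_n\leq t)\leq P\big(\tilde\tau_n(Md,r)\leq t\big)$, hence $P(\eta_t\geq n)\leq P\big(\tilde\eta_t(Md,r)\geq n\big)$ for every $n$. Since both counts are integer-valued, for any $s\geq0$ I take $n=\lfloor s\rfloor+1$ to obtain $P(\eta_t>s)\leq P\big(\tilde\eta_t(Md,r)>s\big)$, which is precisely $\tilde\eta_t(Md,r)\succ\eta_t$. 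The only delicate point here is the reversal of roles between large jump times and small counts, which the duality handles cleanly.
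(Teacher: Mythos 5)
Your proposal is correct, and at the top level it follows the same route the paper intends: the paper's entire ``proof'' of this corollary is the single sentence that Lemmas \ref{sumsucc} and \ref{xi_n} imply it, i.e., exactly your chain of comparing rates, summing, and then dualizing jump times against jump counts. Where you genuinely add something is in the second assertion, and your extra care there is warranted rather than pedantic: applied literally, part 2 of Lemma \ref{sumsucc} requires \emph{both} sequences to consist of independent random variables, and the holding times $\{\xi_k\}$ of the IPS are not unconditionally independent, since the rate $\|h_{k-1}\|_1$ of $\xi_k$ is a functional of the random jump history (the paper elides this point here, just as it does in the proof of Theorem \ref{nonexplosive}, where the $\{\xi_n\}$ are called independent exponential variables; the correct reading in both places is conditional independence given the skeleton). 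Your fix is the right one: conditionally on $\{h_k\}$ the $\xi_k$ are independent with $\xi_k\overset{d}{=}U(\|h_{k-1}\|_1)$, Lemma \ref{L1prior} bounds the rates by $(k+r-1)Md$ almost surely, Lemma \ref{sumsucc}(2) then applies conditionally, and since the law of $\tilde\tau_n(Md,r)$ does not depend on the skeleton the dominance survives averaging; the pathwise coupling $\xi_k=E_k/\|h_{k-1}\|_1$, $\tilde\xi_k=E_k/\big((k+r-1)Md\big)$ that you sketch is an equally valid (and arguably cleaner) alternative. Your third step, via the duality $\{\eta_t\geq n\}=\{\tau_n\leq t\}$ and integer-valuedness of the counts, is the standard argument and matches what the paper takes for granted. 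In short: same strategy as the paper, but your write-up closes a real gap in the paper's one-line justification.
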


\begin{thm}\label{nonexplosive}
	$\{f_t:t\geq 0\}$ is non-explosive, i.e.
	\begin{equation}
	P_f\Big(\sum_{n=1}^{\infty}\xi_n=\infty\Big)=1,~\forall f\in F_0(G).
	\end{equation}
\end{thm}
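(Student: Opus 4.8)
The plan is to exploit the stochastic domination of our IPS by a Yule process, already established in Corollary \ref{cortau}, and to couple it with the non-explosiveness and the explicit first-moment formula for the Yule process from Lemma \ref{yule}. Since $f\in F_0(G)$ has finite support with integer entries, $\|f\|_1<\infty$, so I may fix an integer $r\geq\max\{1,\|f\|_1/(Md)\}$; Corollary \ref{cortau} then applies with parameters $(Md,r)$, giving in particular $\tilde{\eta}_t(Md,r)\succ\eta_t$.

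First I would translate the formal non-explosion statement into a statement about the jump counter $\eta_t$. Recall that $\tau_n=\sum_{k=1}^n\xi_k$ is nondecreasing in $n$ with limit $\tau_\infty=\sum_{n=1}^\infty\xi_n\in[0,+\infty]$, and that $\eta_t=\sup\{n:\tau_n\leq t\}$. The events satisfy $\{\tau_\infty=\infty\}=\bigcap_{k\in\N}\{\eta_k<\infty\}$: indeed $\tau_\infty=\infty$ is equivalent to $\tau_n\to\infty$, which is equivalent to $\eta_t<\infty$ for every $t$, and since $t\mapsto\eta_t$ is nondecreasing it suffices to test integer times. Hence it is enough to show $\eta_t<\infty$ almost surely for each fixed $t>0$.

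The key step is then a single expectation bound. From $\tilde{\eta}_t(Md,r)\succ\eta_t$ and the first part of Lemma \ref{sumsucc} I obtain $\E_f\eta_t\leq\E\tilde{\eta}_t(Md,r)$, while Lemma \ref{yule} gives $\E\tilde{\eta}_t(Md,r)=re^{Mdt}<\infty$. Therefore $\E_f\eta_t<\infty$, which forces $\eta_t<\infty$ almost surely. Intersecting over integer times $t=1,2,\dots$ yields $P_f(\tau_\infty=\infty)=1$, that is, $P_f\big(\sum_{n=1}^\infty\xi_n=\infty\big)=1$, as claimed.

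This argument is short precisely because the substantive work—building the Yule comparison through the conditional rate bound $\xi_n\succ\tilde{\xi}_n(Md,r)$ and propagating it to the partial sums $\tau_n$—has already been done in Lemma \ref{xi_n} and Corollary \ref{cortau}. The only point that genuinely deserves care, and which I expect to be the main (albeit mild) obstacle, is the measure-theoretic bookkeeping of the first step: confirming that almost-sure finiteness of $\eta_t$ for all $t$ really is equivalent to the non-explosion event $\{\sum_n\xi_n=\infty\}$, and that reducing to a countable family of times loses nothing by monotonicity of $t\mapsto\eta_t$. As an alternative I could bypass $\eta_t$ altogether: letting $n\to\infty$ in $\tau_n\succ\tilde{\tau}_n(Md,r)$ gives $\sum_n\xi_n\succ\sum_n\tilde{\xi}_n(Md,r)$ via $P_f(\tau_\infty>t)=\lim_n P_f(\tau_n>t)$, and since the Yule process is non-explosive the right-hand sum is infinite almost surely, forcing the same for the left; but the $\eta_t$ route is cleaner because it invokes the explicit formula $re^{Mdt}$ directly.
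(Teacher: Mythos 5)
Your proposal is correct, but it takes a genuinely different route from the paper. The paper works directly with the total elapsed time: by Lemmas \ref{sumsucc} and \ref{xi_n} it bounds $\E_f\xi_n\geq \frac{1}{(n-1)Md+\|f\|_1}$, concludes $\E_f\big(\sum_{n=1}^{\infty}\xi_n\big)=\infty$ from the divergence of this harmonic-type series, and then invokes the classical dichotomy for sums of independent exponential variables (the sum is a.s.\ infinite if and only if the sum of the means is infinite). You instead work with the jump counter: you reduce non-explosion to the a.s.\ finiteness of $\eta_t$ for each fixed $t$ via the identity $\{\sum_n\xi_n=\infty\}=\bigcap_{k\in\N}\{\eta_k<\infty\}$, bound $\E_f\eta_t\leq \E\tilde{\eta}_t(Md,r)=re^{Mdt}<\infty$ using Corollary \ref{cortau} and Lemma \ref{yule}, and finish with ``finite expectation implies a.s.\ finite.'' Each approach has its merits. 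The paper's argument is shorter and does not need the Yule mean formula, but the dichotomy it cites is a nontrivial classical fact not proven in the paper, and its casual phrase ``$\{\xi_n\}$ are independent exponential variables'' glosses over the fact that the $\xi_n$ are only conditionally independent given the skeleton chain (their rates $\|h_{n-1}\|_1$ are random). Your argument sidesteps both issues: beyond the domination machinery already packaged in Corollary \ref{cortau}, the only probabilistic input is Markov's inequality, and as a by-product you get the quantitative bound $\E_f\eta_t\leq re^{Mdt}$, which is exactly the estimate the paper reuses later in the proof of Theorem \ref{LLN}. Your reduction step (monotonicity of $t\mapsto\eta_t$ and testing only integer times) is also handled correctly.
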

\begin{proof}
	By Lemmas \ref{sumsucc} and \ref{xi_n},
	\begin{equation}
	\E_f\Big(\sum_{n=1}^{\infty}\xi_n\Big)=\sum_{n=1}^{\infty}\E_f\xi_n\geq \sum_{n=1}^{\infty}\frac{1}{(n-1)Md+\|f\|_1}=\infty.
	\end{equation}
	Since $\{\xi_n\}$ are independent exponential variables, this is equivalent to
	\begin{equation}
	P_f\Big(\sum_{n=1}^{\infty}\xi_n=\infty\Big)=1.
	\end{equation}
\end{proof}

\begin{thm}\label{moments1}
	$\forall t\geq 0$, $\alpha\geq 1$, $f\in F_0(G)$, we have $\E_f \|f_t\|_{\alpha}^{\alpha}<\infty$
\end{thm}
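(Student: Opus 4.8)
The plan is to interpolate between the $L^1$ and $L^\infty$ norms so as to express a pathwise upper bound for $\|f_t\|_\alpha^\alpha$ as a polynomial-growth functional of the jump count $\eta_t$, and then to control the moments of $\eta_t$ by dominating it stochastically with a Yule process. The case $t=0$ is immediate, since $f_0=f\in F_0(G)$ has finite support with finite entries, so $\|f_0\|_\alpha^\alpha=\|f\|_\alpha^\alpha<\infty$; I assume $t>0$ from now on.

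First I would combine Lemma \ref{cauchy} with the pathwise bounds of Lemma \ref{L1prior}. Lemma \ref{cauchy} gives $\|f_t\|_\alpha^\alpha\leq\|f_t\|_1\|f_t\|_\infty^{\alpha-1}$ almost surely, and substituting $\|f_t\|_1\leq Md\,\eta_t+\|f\|_1$ together with $\|f_t\|_\infty\leq\eta_t+\|f\|_\infty$ (both legitimate because $\alpha-1\geq 0$) yields
\[
\|f_t\|_\alpha^\alpha\leq(Md\,\eta_t+\|f\|_1)(\eta_t+\|f\|_\infty)^{\alpha-1}=:g(\eta_t),
\]
where $g$ is a nonnegative, nondecreasing function on $[0,\infty)$ since all of its coefficients are nonnegative.

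Next I would pass to expectations using stochastic domination. Fixing an integer $r\geq\|f\|_1/(Md)$, Corollary \ref{cortau} gives $\tilde{\eta}_t(Md,r)\succ\eta_t$. Because $g$ is nondecreasing, domination is preserved, $g(\tilde{\eta}_t(Md,r))\succ g(\eta_t)$, so Lemma \ref{sumsucc}(1) gives
\[
\E_f\|f_t\|_\alpha^\alpha\leq\E_f\,g(\eta_t)\leq\E\,g\big(\tilde{\eta}_t(Md,r)\big).
\]
Finally, applying the elementary inequality $(a+b)^p\leq\max\{1,2^{p-1}\}(a^p+b^p)$ with $p=\alpha-1\geq 0$ bounds $g(\tilde{\eta}_t)$ by a finite linear combination, with coefficients depending only on $\alpha,M,d,\|f\|_1,\|f\|_\infty$, of the powers $\tilde{\eta}_t^{\alpha}$, $\tilde{\eta}_t$, $\tilde{\eta}_t^{\alpha-1}$ and the constant $1$. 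Each such term has finite expectation by the Yule-process moment bound in Lemma \ref{yule} (namely $\E\,\tilde{\eta}_t^{\beta}<\infty$ for all $\beta>0$, $t>0$), and the claim follows.

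I expect the only point needing care to be the preservation of stochastic domination under the nondecreasing map $g$: for each level $s$ the set $g^{-1}((s,\infty))$ is a right-infinite interval, so $P(g(\tilde{\eta}_t)>s)\geq P(g(\eta_t)>s)$ reduces to the defining inequality $\tilde{\eta}_t\succ\eta_t$ (passing to a one-sided limit in the case where the interval is closed at its left endpoint). Beyond this, the argument is routine bookkeeping, and the finiteness rests entirely on Lemma \ref{yule}.
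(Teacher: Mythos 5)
Your proposal is correct and follows essentially the same route as the paper's proof: interpolate via Lemma \ref{cauchy}, bound pathwise by Lemma \ref{L1prior}, transfer to the Yule process via Corollary \ref{cortau}, and conclude with the moment bounds of Lemma \ref{yule}. The only differences are presentational — you make explicit the two points the paper leaves implicit (that stochastic domination is preserved under the nondecreasing map $g$, and that the resulting expectation is finite by expanding with $(a+b)^p\leq\max\{1,2^{p-1}\}(a^p+b^p)$) — which is sound but not a different argument.
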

\begin{proof}
	Choose any $r\in\Z_+$ large enough such that $Mdr\geq\|f\|_1$.
	By Lemma \ref{cauchy},  Lemma \ref{L1prior} and Corollary \ref{cortau},
\begin{equation}
\begin{split}
& \mathbb{E}_f\|f_t\|_{\alpha}^{\alpha}\leq \mathbb{E}_f(\|f_t\|_1\cdot\|f_t\|_{\infty}^{\alpha-1})
\leq\mathbb{E}_f[(Md\eta_t+\|f\|_1)(\eta_t+\|f\|_{\infty})^{\alpha-1}]\\
&\leq
\mathbb{E}_f\Big([Md\tilde{\eta}_t(Md,r)+\|f\|_1][\tilde{\eta}_t(Md,r)+\|f\|_{\infty}]^{\alpha-1}\Big).
\end{split}
\end{equation}
	Then the proposition follows from Lemma \ref{yule}.
\end{proof}

\section{Stochastic representations for the wave equation on graphs}\label{FeynmanKac}
\begin{thm}\label{ibvp}
	Suppose $\psi$, $\varphi$ are two real-valued functions on $V$. Define $f\in F(G)$ as follows:
	\begin{equation}
	\begin{cases}
	&f(x)=m_x\psi(x),~x\in V\\
	&f(\l x,y\r)=[\varphi(y)-\varphi(x)]e_{xy},~\l x,y\r\in E.
	\end{cases}
	\end{equation}
	If $f\in F_0(G)$ and $\{f_t:t\geq 0\}$ is the interacting particle system starting from $f$, then $u:V\times [0,\infty)\rightarrow\R$,
	\begin{equation}
	u(x,t)=\varphi(x)+\frac{1}{m_x}\E_f\Big(\int_0^t f_s(x)\d s\Big)
	\end{equation}
	solves the Dirichlet IBVP (\ref{Dirichlet0}).
\end{thm}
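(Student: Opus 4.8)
The plan is to verify that the proposed function $u(x,t)$ satisfies all three conditions in the Dirichlet IBVP (\ref{Dirichlet0}) by exploiting the Feynman-Kac machinery that the infinitesimal generator $\A$ provides. The central observation is that $\E_f f_t(\xi)$ should evolve according to the linear operator $\L_G$, mirroring the first-order system (\ref{Dirichlet1}). First I would compute the action of $\A$ on the coordinate evaluation functionals $f\mapsto f(\xi)$ for $\xi\in V\cup E$. Reading off the two transition types in Definition \ref{defn}, a transition at rate $|f(x)|/m_x$ adds $\sgn(f(x))\sum_{y\sim x}\delta_{yx}$, which changes the edge values, while a transition at rate $k_{xy}|f(\l x,y\r)|$ adds $\sgn(f(\l x,y\r)\cdot e_{xy})(\hat\delta_x-\hat\delta_y)$, changing the node values on $V_0$. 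A direct computation should show that $\A(f\mapsto f(\xi)) = \L_G f(\xi)$ for $\xi\in V_0\cup E$, and $\A(f\mapsto f(x))=0$ for $x\in V_1$ (since $\hat\delta$ only fires on $V_0$). This is precisely the generator identity that encodes the dynamics (\ref{Dirichlet1}).

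Next I would set $w(\xi,t)=\E_f f_t(\xi)$ and use Dynkin's formula together with the integrability guaranteed by Theorem \ref{moments1} (which gives $\E_f\|f_t\|_\alpha^\alpha<\infty$, hence finite first moments of each coordinate) and non-explosivity from Theorem \ref{nonexplosive}. Applying Dynkin's formula to the linear functional $f\mapsto f(\xi)$ yields
\begin{equation}
\E_f f_t(\xi)=f(\xi)+\E_f\int_0^t \A(f_s\mapsto f_s(\xi))\,\d s = \zeta(\xi)+\int_0^t \L_G w(\cdot,s)(\xi)\,\d s,
\end{equation}
where the interchange of $\L_G$ with expectation and integral is justified by the moment bounds. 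Differentiating in $t$ gives $\partial_t w(\xi,t)=\L_G w(\xi,t)$ on $(V_0\cup E)\times[0,\infty)$, with $w(x,t)\equiv 0$ on $V_1$ because the generator annihilates those coordinates and the initial value $\zeta(x)=m_x\psi(x)=0$ there (using $\psi|_{V_1}=0$). Thus $w$ solves (\ref{Dirichlet1}), identifying $\E_f f_t$ with the vector field $v(\cdot,t)$ from Remark \ref{physics}.

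Having established that $w(x,t)=\E_f f_t(x)=m_x\frac{\d u}{\d t}(x,t)$ in the sense of the reformulation, I would then recover the original second-order equation. By construction $u(x,t)=\varphi(x)+\frac{1}{m_x}\int_0^t w(x,s)\,\d s$, so $m_x\frac{\d u}{\d t}(x,t)=w(x,t)=\E_f f_t(x)$. Differentiating once more and invoking the node-equation $\partial_t w(x,t)=\L_G w(x,t)=\sum_{y\sim x}k_{xy}e_{xy}\cdot\E_f f_t(\l x,y\r)$, I must match this against $\sum_{y\in V}k_{xy}[u(y,t)-u(x,t)]$. The identification comes from the edge-coordinate evolution: integrating $\partial_t w(\l x,y\r,s)=\L_G w(\l x,y\r,s)=(\frac{w(y,s)}{m_y}-\frac{w(x,s)}{m_x})e_{xy}$ against time and using $w(\xi,0)=\zeta(\xi)=[\varphi(y)-\varphi(x)]e_{xy}$ shows that $\E_f f_t(\l x,y\r)=[u(y,t)-u(x,t)]e_{xy}$, which upon substitution closes the loop to the discrete wave equation. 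The initial conditions $u(x,0)=\varphi(x)$ and $\frac{\d u}{\d t}(x,0)=\frac{1}{m_x}w(x,0)=\psi(x)$ follow immediately from the integral vanishing at $t=0$ and from $\zeta(x)=m_x\psi(x)$.

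The main obstacle I anticipate is the rigorous justification of Dynkin's formula and the differentiation under the expectation-integral sign for an interacting particle system on a possibly infinite graph with unbounded generator. Unlike the finite-state case, one cannot blindly apply the martingale formula; the argument needs the uniform-in-time moment control from Theorem \ref{moments1} and the Yule-process domination of Corollary \ref{cortau} to dominate the jump sums and legitimize Fubini and the limit defining $\partial_t$. In particular, showing that $t\mapsto\E_f f_t(\xi)$ is continuously differentiable with derivative $\E_f \A(f_\cdot\mapsto f_\cdot(\xi))$ requires verifying that the expected total variation of the coordinate process over $[0,t]$ is finite and depends continuously on $t$, which is exactly where the stochastic-domination estimates and $\E\tilde\eta_t(\lambda,r)=re^{\lambda t}$ from Lemma \ref{yule} do the heavy lifting.
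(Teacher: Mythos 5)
Your proposal follows essentially the same route as the paper: you compute the generator $\A$ on the coordinate functionals $f\mapsto f(x)$ and $f\mapsto f(\l x,y\r)\cdot e_{xy}$, obtain the forward (master) equation $\frac{\d}{\d t}\E_f f_t=\L_G\,\E_f f_t$ on $V_0\cup E$ with frozen $V_1$-coordinates (this is exactly the paper's Theorem \ref{diff}), and then integrate the edge equation to identify $\E_f f_t(\l x,y\r)=[u(y,t)-u(x,t)]e_{xy}$ and recover the second-order equation together with the initial and boundary conditions. Your added attention to justifying Dynkin's formula via the moment bounds of Theorem \ref{moments1} and the Yule-process domination is a more explicit account of what the paper dismisses as ``a routine procedure,'' but it is not a different argument.
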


Theorem \ref{ibvp} provides a Feynman-Kac-type formula for the wave equation on graph $G$, subject to prescribed initial and boundary conditions. When the initial data $\phi$ and $\psi$ are nonzero only on a finite number of nodes and edges, then the existence of the solution to IVP (\ref{Dirichlet0}), which is a finite or infinite linear system of second-order ODEs, is proved by direct construction with the help of our interacting particle system $\{f_t:t\geq 0\}$. Generally if $\varphi$ and $\psi$ satisfies
\begin{equation}
\sum_{x\in V}m_x\psi(x,t)^2+\sum_{\l x,y\r\in E}k_{xy}[\varphi(x,t)-\varphi(y,t)]^2<\infty,
\end{equation}
then we can represent them as sums of finitely supported functions on $G$, and the solution can be constructed by superposition principle. Since Euclidean spaces can be approximated by meshes, the solution to the wave equation can be expressed as the limit of a sequence of expectations.

Now we come to the proof. The forward equations lead to the ``master equation" of this IPS. Note that due to the nature of this system, it is different from ordinary master equations where the transition rates are nonnegative.
\begin{thm}\label{diff}
	Suppose $f\in F_0(G)$ and let $\bar{f}(\cdot,t)=\E_f f_t(\cdot)$,~$t\geq 0$. Then
	\begin{equation}\label{mastereqn}
	\begin{cases}
	&\frac{\d}{\d t}\bar{f}(\xi,t)=\L_G \bar{f}(\xi,t),~(\xi,t)\in (V_0\cup E)\times [0,+\infty),\\
	&\bar{f}(\xi,0)=f(\xi),~\xi\in V\cup E,\\
	&\bar{f}(x,t)=f(x),~x\in V_1.\\
	\end{cases}
	\end{equation}
\end{thm}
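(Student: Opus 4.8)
The plan is to read off the evolution of $\bar{f}(\xi,t)=\E_f f_t(\xi)$ from Dynkin's formula applied to the coordinate observables. For a fixed node $z\in V$ take $\phi_z(f)=f(z)$, and for a fixed edge $\l z,w\r\in E$ take $\phi_{zw}(f)=f(\l z,w\r)\cdot e_{zw}$, a scalar that determines the vector $f(\l z,w\r)$ since membership in $F_E(G)$ forces $f(\l z,w\r)$ to be parallel to $w-z$. If I can establish, for these $\phi$, the identity
\begin{equation}
\E_f\phi(f_t)=\phi(f)+\E_f\int_0^t\A\phi(f_s)\,\d s,
\end{equation}
then differentiating in $t$ and using that $\A\phi$ turns out to be a \emph{linear} coordinate functional of $f$ (so that $\E_f\A\phi(f_s)=\L_G\bar{f}(\xi,s)$ by linearity of expectation, $\L_G$ being a finite sum that commutes with $\E$) closes the system and produces exactly (\ref{mastereqn}).

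The two generator computations come next. For $\phi_z$, the node-firing transitions $f\mapsto f+\sgn(f(x))\sum_{y\sim x}\delta_{yx}$ alter only edge coordinates and hence leave $\phi_z$ unchanged; only edge-firing transitions contribute. Writing $f(\l x,y\r)=c(y-x)$ gives the simplification $|f(\l x,y\r)|\,\sgn(f(\l x,y\r)\cdot e_{xy})=f(\l x,y\r)\cdot e_{xy}$, whence
\begin{equation}
\A\phi_z(f)=\sum_{\l x,y\r\in E}k_{xy}\big(f(\l x,y\r)\cdot e_{xy}\big)\big(\hat{\delta}_x(z)-\hat{\delta}_y(z)\big),
\end{equation}
which collapses to $\sum_{y\sim z}k_{zy}e_{zy}\cdot f(\l z,y\r)=\L_G f(z)$ when $z\in V_0$, and to $0$ when $z\in V_1$ because $\hat{\delta}_x$ vanishes off $V_0$. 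Symmetrically, for $\phi_{zw}$ the edge-firing transitions change only node coordinates and drop out, while the firings at the two endpoints $z,w$ contribute, after the same sign simplification, $\frac{f(w)}{m_w}-\frac{f(z)}{m_z}=\big(\L_G f(\l z,w\r)\big)\cdot e_{zw}$. In every case $\A\phi(f)$ equals $\L_G f(\xi)$ (dotted with $e_{zw}$ on edges), which is precisely what the target equation requires.

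The observables $\phi_z,\phi_{zw}$ are unbounded on $F_0(G)$, so the real work---and the main obstacle---is upgrading the abstract relation ``$\phi(f_t)-\phi(f_0)-\int_0^t\A\phi(f_s)\,\d s$ is a local martingale'' (valid for any $\phi$ on a non-explosive pure-jump process, non-explosivity being supplied by Theorem \ref{nonexplosive}) to a genuine martingale. Here the estimates of Section \ref{preliminaries} suffice: one has the uniform bound $|\A\phi(f)|\leq Md\|f\|_\infty$ (using $d\geq2$ and $k_{xy},m_x^{-1}\leq M$), and by Lemma \ref{L1prior}, Corollary \ref{cortau} and Lemma \ref{yule},
\begin{equation}
\E_f\|f_s\|_\infty\leq\E_f\eta_s+\|f\|_\infty\leq re^{Mds}+\|f\|_\infty,
\end{equation}
which is locally bounded in $s$; hence $s\mapsto\E_f|\A\phi(f_s)|$ is integrable on $[0,t]$, licensing both the interchange and, via continuity of $s\mapsto\bar{f}(\xi,s)$ (dominated convergence with the same bounds), the differentiation. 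A localization argument stopping at the jump times $\tau_n$ followed by dominated convergence makes this precise. Finally $\bar{f}(\xi,0)=\E_f f_0(\xi)=f(\xi)$ supplies the initial data and the vanishing of $\A\phi_z$ on $V_1$ gives $\bar{f}(x,t)\equiv f(x)$ there, completing (\ref{mastereqn}). I expect the martingale/integrability justification to be the only genuine difficulty; the generator identities, though requiring care with the signs and the $F_E(G)$ constraint, are mechanical.
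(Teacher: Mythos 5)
Your proposal is correct and follows essentially the same route as the paper: the paper's proof likewise reduces (\ref{mastereqn}) to the coordinate functionals $\phi^x(f)=f(x)$ and $\phi^{xy}(f)=f(\l x,y\r)\cdot e_{xy}$ and invokes the forward equation $\frac{\d}{\d t}\E_f\phi(f_t)=\E_f(\A\phi(f_t))$ for them, calling the verification a ``routine procedure.'' Your generator computations and the integrability/localization argument (via Lemma \ref{L1prior}, Corollary \ref{cortau} and Lemma \ref{yule}) correctly supply exactly the details the paper leaves implicit.
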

\begin{proof}
	The statement in Theorem \ref{diff} is rewritten in the following form.
	\begin{equation}
	\begin{cases}
	&\frac{\d}{\d t}\E_f f_t(x)=\sum_{y\sim x}k_{xy}
	\E_f f_t(\l x,y\r)\cdot e_{xy},~x\in V_0,\\
	&\frac{\d}{\d t}\E_f f_t(\l x,y\r)\cdot e_{xy}=
	\frac{\E_f f_t(y)}{m_y}-\frac{\E_f f_t(x)}{m_x},~\l x,y\r\in E.
	\end{cases}
	\end{equation}
	Suppose $x\in V_0$ and $\l x,y\r\in E$. Define $\phi^x:F_0(G)\rightarrow \R$, $f\mapsto f(x)$ and $\phi^{xy}:F_0(G)\rightarrow \R$, $f\mapsto f(\l x,y\r)\cdot e_{xy}$. A routine procedure will show that the forward equation $\frac{\d}{\d t}\E_f \phi(f_t)=\E_f (\A\phi(f_t))$ holds for these functionals. Then the statement follows.
\end{proof}

Comparing with equations in (\ref{Dirichlet1}), the equivalent form of IBVP (\ref{Dirichlet0}), we see that $\bar{f}$ solves (\ref{Dirichlet1}) as long as $f$ is defined in the following way:
\begin{equation}
\begin{cases}
&f(x)=m_x\psi(x),~x\in V\\
&f(\l x,y\r)=[\varphi(y)-\varphi(x)]e_{xy},~\l x,y\r\in E.
\end{cases}
\end{equation}
The consistency $\psi|_{V_1}=0$ forces $f(x)=0$ in $V_1$. The first two equations in Theorem \ref{diff} lead to
\begin{equation}
\frac{\d^2}{\d t^2}\bar{f}(x,t)=\sum_{y\sim x} k_{xy}\Big(\frac{\bar{f}(y,t)}{m_y}-\frac{\bar{f}(x,t)}{m_x}\Big),~(x,t)\in V_0\times[0,+\infty).
\end{equation}
Then it is easily seen that
\begin{equation}
u(x,t)=\varphi(x)+\frac{1}{m_x}\E_f\Big(\int_0^t f_s(x)\d s\Big)
\end{equation}
solves the IBVP (\ref{Dirichlet0}).

\section{$L^2$ estimates and limit theorems}\label{limits}
Now we are going to present scaling limits of the system, including LLN-type theorems and a phase transition phenomenon. They are established based on $L^2$ estimates, which are a natural choice for hyperbolic equations, and turn out to be powerful. The proofs will be shown in Sections \ref{proofLLN} and \ref{proofimportant}.

\subsection{$L^2$ estimates}
\begin{thm}\label{LLN}
	Suppose $f\in F_0(G)$ and $f|_{V_1}=0$. Let $\{f_t:t\geq 0\}$ be the IPS starting from $f$ and $\bar{f}_t=\E_f f_t$. Then
	\begin{equation}
	\E_f \|f_t-\bar{f}_t\|_2^2\leq Mdt\|f\|_1+(\|f\|_1+Md)e^{Mdt}.
	\end{equation}
	Suppose $\zeta\in L^2(G)$ and $\zeta|_{V_1}=0$. Let $g_t(\cdot)$ be the solution of IBVP (\ref{Dirichlet1}):
	\begin{equation}
	\begin{cases}
	&\frac{\d}{\d t} g_t=\L_G g_t,~in~(V_0\cup E)\times\R_+,\\
	& g_0=\zeta,~~g_t|_{V_1}=0.
	\end{cases}
	\end{equation}
	If $\{c_N\}_{N=1}^{\infty}$ is a positive sequence tending to infinity, $\{f_0^N\}$ is a sequence in $F_0(G)$ such that $\lim_{N\rightarrow\infty}\|\zeta-f_0^N/c_N\|_2=0$, and for any N, $\{f_t^N:t\geq 0\}$ is the IPS starting from $f_0^N$, then
	\begin{equation}
	\E_f\|f_t^N/c_N-g_t\|_2^2
	\leq \|\zeta-f_0^N/c_N\|_2^2+\frac{Mdt}{c_N^2}\|f_0^N\|_1+\frac{1}{c_N^2}(\|f_0^N\|+Md)e^{Mdt},~\forall t.
	\end{equation}
	Therefore, for any fixed $t\geq 0$,
	\begin{equation}
	\begin{split}
	& \E_f\|f_t^N/c_N-g_t\|_2^2=O(c_N^{-1})\rightarrow 0,\\
	& P(\sup_{0\leq s\leq t}\|f_s^N/c_N-g_s\|_2^2>\delta)
	=O((\delta c_N)^{-1})\rightarrow 0,~\forall\delta>0.\\
	\end{split}
	\end{equation}
\end{thm}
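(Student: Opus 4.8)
The plan is to extract everything from the ``conservation of energy'' built into the skew-symmetry of $\L_G$, together with a single generator computation for the functional $\phi(f)=\|f\|_2^2$. First I would record two structural facts. Since no transition ever alters a coordinate in $V_1$ (type-1 jumps only touch edges, and type-2 jumps move vertices through $\hat\delta_x-\hat\delta_y$, which vanishes on $V_1$), the condition $f|_{V_1}=0$ is preserved: $f_t|_{V_1}=0$ almost surely, and likewise $\bar f_t|_{V_1}=0$ by Theorem \ref{diff}. Consequently, whenever a field vanishes on $V_1$ the boundary terms drop out of $[\cdot,\cdot]_G$, so the skew-symmetry identity $[h,\L_G h]_G=0$ applies to $f_t$, to $\bar f_t$, and to $g_t$. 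This yields energy conservation for the deterministic flow, $\frac{\d}{\d t}\|\bar f_t\|_2^2=2[\bar f_t,\L_G\bar f_t]_G=0$, whence $\|\bar f_t\|_2=\|f\|_2$, and the same for $\|g_t\|_2$.

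For the variance bound I would use that the fluctuation is mean-zero: expanding $[f_t-\bar f_t,f_t-\bar f_t]_G$ and taking expectations gives $\E_f\|f_t-\bar f_t\|_2^2=\E_f\|f_t\|_2^2-\|\bar f_t\|_2^2=\E_f\|f_t\|_2^2-\|f\|_2^2$. It then suffices to track $\E_f\|f_t\|_2^2$ through Dynkin's formula $\frac{\d}{\d t}\E_f\phi(f_t)=\E_f\A\phi(f_t)$. For a jump $f\mapsto f+\Delta$ one has $\phi(f+\Delta)-\phi(f)=2[f,\Delta]_G+\|\Delta\|_2^2$, so $\A\phi$ splits into a drift part $2[f,\sum(\text{rate})\Delta]_G$ and a quadratic part $\sum(\text{rate})\|\Delta\|_2^2$. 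The drift $\sum(\text{rate})\Delta$ equals $\L_G f$ on $V_0\cup E$ and $0$ on $V_1$ (this is exactly the computation underlying Theorem \ref{diff}), so the drift part is $2[f_t,\L_G f_t]_G=0$. Evaluating $\|\Delta\|_2^2$ for each jump type — a type-1 jump at $x$ gives $\sum_{y\sim x}k_{xy}\le Md$ at rate $|f(x)|/m_x$, and a type-2 jump at $\l x,y\r$ gives at most $m_x^{-1}+m_y^{-1}\le 2M$ at rate $k_{xy}|f(\l x,y\r)|$ — and using $d\ge2$ bounds the quadratic part by $Md\|f\|_1$. Hence $\frac{\d}{\d t}\E_f\|f_t\|_2^2\le Md\,\E_f\|f_t\|_1$; integrating and inserting $\E_f\|f_s\|_1\le Md\,\E_f\eta_s+\|f\|_1$ from Lemma \ref{L1prior}, with $\E_f\eta_s\le re^{Mds}$ for $r=\lceil\|f\|_1/(Md)\rceil$ from Corollary \ref{cortau} and Lemma \ref{yule}, produces precisely $Mdt\|f\|_1+(\|f\|_1+Md)e^{Mdt}$, the additive $Md$ arising from rounding $r$ up to an integer.

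For the scaling statement I would decompose $f_t^N/c_N-g_t=(f_t^N-\bar f_t^N)/c_N+(\bar f_t^N/c_N-g_t)$. The second, deterministic, summand solves the homogeneous IBVP (\ref{Dirichlet1}) with data $f_0^N/c_N-\zeta$, which vanishes on $V_1$, so energy conservation gives $\|\bar f_t^N/c_N-g_t\|_2=\|\zeta-f_0^N/c_N\|_2$ for every $t$. Because the first summand is mean-zero and the second deterministic, the cross term vanishes and $\E\|f_t^N/c_N-g_t\|_2^2$ equals the sum of $\|\zeta-f_0^N/c_N\|_2^2$ and $c_N^{-2}\E\|f_t^N-\bar f_t^N\|_2^2$, the latter controlled by the variance bound applied to $f_0^N$; this is the asserted inequality. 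The $O(c_N^{-1})$ rate then follows once the approximating data satisfy the natural scaling $\|f_0^N\|_1=O(c_N)$ and $\|\zeta-f_0^N/c_N\|_2^2=O(c_N^{-1})$. Finally, the maximal inequality rests on a submartingale property: writing $\Phi(f,t)=\|f/c_N-g_t\|_2^2$ and using the space-time generator, $\partial_t\Phi$ contributes $-2[f/c_N-g_t,\L_G g_t]_G$ while $\A\Phi$ contributes $2[f/c_N-g_t,\L_G(f/c_N)]_G+c_N^{-2}(\text{quad part})$, and since $f/c_N-g_t$ vanishes on $V_1$, skew-symmetry collapses the linear terms into $2[f/c_N-g_t,\L_G(f/c_N-g_t)]_G=0$, leaving $\partial_t\Phi+\A\Phi=c_N^{-2}(\text{quad part})\ge0$. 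Thus $\|f_t^N/c_N-g_t\|_2^2$ is a nonnegative submartingale and Doob's maximal inequality bounds the probability by $\delta^{-1}\E\|f_t^N/c_N-g_t\|_2^2=O((\delta c_N)^{-1})$. The main obstacle I anticipate is not any single identity but the analytic bookkeeping that legitimizes Dynkin's formula and the differentiation under the expectation for the unbounded functional $\|\cdot\|_2^2$ on the non-compact, infinite state space; this is where non-explosivity (Theorem \ref{nonexplosive}) and the finite moments $\E_f\|f_t\|_\alpha^\alpha<\infty$ (Theorem \ref{moments1}) must be invoked, alongside the care that $f_t|_{V_1}=0$ is genuinely preserved so that every appeal to skew-symmetry is valid.
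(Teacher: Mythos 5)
Your proposal is correct, and its core is the paper's own argument: energy conservation from the skew-symmetry of $\L_G$ (Lemma \ref{energy1}), the identity $\E_f\|f_t-\bar{f}_t\|_2^2=\E_f\|f_t\|_2^2-\|f\|_2^2$, the generator computation for $\phi(f)=\|f\|_2^2$ in which the drift part collapses to $2[f_t,\L_G f_t]_G=0$ and the quadratic part is bounded by $Md\,\|f_t\|_1$ (Lemma \ref{L2estimate}), the bound $\E_f\eta_t\le r e^{Mdt}$ via comparison with the Yule process (Lemma \ref{L1prior}, Corollary \ref{cortau}, Lemma \ref{yule}), and the bias--variance decomposition together with energy conservation applied to $\bar{f}_t^N/c_N-g_t$. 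The one place you genuinely diverge is the maximal inequality. The paper proves Lemma \ref{martingale} --- that $\|f_t-\E_f f_t\|_2^2$ is a submartingale --- with no generator computation at all, using the Markov property, conditional Jensen (so $\E_g\|f_t-\E_f f_{t+s}\|_2^2\ge\|\E_g f_t-\E_f f_{t+s}\|_2^2$), and Lemma \ref{energy1} applied to the difference of two solutions, and then deduces the probability bound for the scaled process from that lemma plus the deterministic bias term. You instead run a space-time generator computation on $\Phi(f,t)=\|f/c_N-g_t\|_2^2$, checking $\partial_t\Phi+\A\Phi=c_N^{-2}(\text{quadratic part})\ge 0$, so that $\|f_t^N/c_N-g_t\|_2^2$ is itself a nonnegative submartingale and Doob's inequality applies directly to the target quantity. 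Both are valid: your route avoids the extra decomposition step and bounds exactly the quantity appearing in the statement, but it must justify Dynkin's formula for a time-dependent, unbounded functional on an infinite state space, so the integrability bookkeeping you flag (Theorems \ref{nonexplosive} and \ref{moments1}) is genuinely load-bearing; the paper's route is softer, needing only the Markov property, Jensen, and energy conservation. Two points you actually handle more carefully than the paper: you make explicit that the final $O(c_N^{-1})$ rate needs the scaling hypotheses $\|f_0^N\|_1=O(c_N)$ and $\|\zeta-f_0^N/c_N\|_2^2=O(c_N^{-1})$, which do not follow from $\|\zeta-f_0^N/c_N\|_2\to 0$ alone; and you note that $f_0^N|_{V_1}=0$ (preserved by the dynamics) is needed so that $\bar{f}_t^N/c_N-g_t$ satisfies the homogeneous boundary condition and Lemma \ref{energy1} applies --- an assumption the paper uses silently.
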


The conservation of energy in the wave equation results in sub-martingale property of some random quantities describing the deviation, which further yields the bound on probability in Theorem \ref{LLN}.
\begin{lem}\label{martingale}
	Suppose $f\in F_0(G)$ and $f|_{V_1}=0$, and $\{f_t:t\geq 0\}$ is the IPS starting from $f$. Then $\{\|f_t-\E_f f_t\|_2^2: t\geq 0\}$ is a sub-martingale. As a result,
	\begin{equation}
	P_f\Big(\sup_{0\leq s\leq t}\|f_t-\E_f f_t\|_2^2\geq\delta\Big)\leq\delta^{-1}\E_f \|f_t-\E_f f_t\|_2^2, \forall \delta>0.
	\end{equation}
\end{lem}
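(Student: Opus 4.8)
The plan is to realize $\|f_t-\bar f_t\|_2^2$ (write $\bar f_t=\E_f f_t$) as a process with nondecreasing compensator and then invoke Doob's maximal inequality. Introduce the time-dependent functional $\Phi(h,t)=[\,h-\bar f_t,\,h-\bar f_t\,]_G=\|h-\bar f_t\|_2^2$ on $F_0(G)\times\R_+$, so that the quantity of interest is $\Phi(f_t,t)$. Note first that $f|_{V_1}=0$ is preserved by every transition (the node increments $\sum_{y\sim x}\delta_{yx}$ touch only edges, and the edge increments touch nodes only through $\hat\delta$, which vanishes on $V_1$), so both $f_t$ and $\bar f_t$ vanish on $V_1$. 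The time-inhomogeneous Dynkin formula states that
\[
\Phi(f_t,t)-\Phi(f_0,0)-\int_0^t\Big(\partial_s\Phi(f_s,s)+\A\Phi(\cdot,s)(f_s)\Big)\,\d s
\]
is a martingale, so the submartingale property reduces to the pointwise inequality $\partial_t\Phi(h,t)+\A\Phi(\cdot,t)(h)\ge 0$.

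Next I would compute the two contributions to this drift. Differentiating $\Phi$ in $t$ and inserting the master equation $\frac{\d}{\d t}\bar f_t=\L_G\bar f_t$ of Theorem \ref{diff} gives $\partial_t\Phi(h,t)=-2[\,h-\bar f_t,\,\L_G\bar f_t\,]_G$. For the generator, since $\bar f_t$ is frozen inside $\A$, every admissible jump $h\mapsto h+\Delta$ changes $\Phi$ by $2[\,h-\bar f_t,\,\Delta\,]_G+\|\Delta\|_2^2$. Summing the first-order terms against the transition rates reproduces the coordinatewise drift identified in the proof of Theorem \ref{diff}; a short computation using the identity $h(\l x,y\r)\cdot e_{xy}=|h(\l x,y\r)|\sgn(h(\l x,y\r)\cdot e_{xy})$ shows this drift equals $\L_G h$ on $V_0\cup E$. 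Hence, writing $g=h-\bar f_t$,
\[
\A\Phi(\cdot,t)(h)=2[\,g,\,\L_G h\,]_G+Q(h),
\]
where
\[
Q(h)=\sum_{x\in V}\frac{|h(x)|}{m_x}\Big\|\sum_{y\sim x}\delta_{yx}\Big\|_2^2+\sum_{\l x,y\r\in E}k_{xy}|h(\l x,y\r)|\,\|\hat\delta_x-\hat\delta_y\|_2^2\ge 0
\]
collects the nonnegative second-order contributions.

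The crux is the cancellation of the first-order pieces. Adding the two parts and using linearity of $\L_G$,
\[
\partial_t\Phi(h,t)+\A\Phi(\cdot,t)(h)=2[\,g,\,\L_G h-\L_G\bar f_t\,]_G+Q(h)=2[\,g,\,\L_G g\,]_G+Q(h),
\]
and $[\,g,\,\L_G g\,]_G=0$ because $\L_G$ is skew-symmetric — precisely the conservation-of-energy identity recorded after the definition of $\L_G$. (The nodes of $V_1$ contribute nothing to the inner products since $g|_{V_1}=0$, so using the full operator $\L_G$ is harmless.) Thus the integrand equals $Q(f_s)\ge 0$, establishing that $\Phi(f_t,t)$ is a submartingale; it is nonnegative and integrable by Theorem \ref{moments1} together with $\|\bar f_t\|_2^2\le\E_f\|f_t\|_2^2$. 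The stated probability bound is then Doob's maximal inequality for this nonnegative submartingale evaluated at time $t$.

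The main obstacle is upgrading the Dynkin identity from a local to a genuine martingale statement, as $\Phi$ is unbounded. I would localize at the jump times $\tau_n$, where everything is bounded, and pass to the limit $n\to\infty$; the needed uniform integrability follows from non-explosivity (Theorem \ref{nonexplosive}) and the moment estimates. Concretely $Q(h)\le Md\|h\|_1$ (using $\sum_{y\sim x}k_{xy}\le Md$, $m_x^{-1}\le M$, and $d\ge 2$), while $\E_f\int_0^t\|f_s\|_1\,\d s<\infty$ by Lemma \ref{L1prior} and the Yule-process comparison of Corollary \ref{cortau} and Lemma \ref{yule}; this dominates the compensator and legitimizes the passage to the limit.
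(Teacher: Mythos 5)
Your proof is correct, but it takes a genuinely different route from the paper's. You argue infinitesimally: apply a time-inhomogeneous Dynkin formula to $\Phi(h,t)=\|h-\E_f f_t\|_2^2$, compute the generator, observe that the first-order (drift) contribution $2[\,g,\L_G h\,]_G$ cancels against $\partial_t\Phi=-2[\,g,\L_G \bar f_t\,]_G$ by skew-symmetry of $\L_G$ (with the $V_1$ components harmless since everything vanishes there), and conclude that the compensator $\int_0^t Q(f_s)\,\d s$ is nondecreasing. The paper instead argues globally in time via the Markov property: conditioning on $f_s=g$, it bounds $\E_g\|f_t-\E_f f_{t+s}\|_2^2\geq\|\E_g f_t-\E_f f_{t+s}\|_2^2$ by Jensen, then notes that $\E_g f_t$ and $\E_f f_{t+s}$ both evolve under the linear flow of Theorem \ref{diff}, so their difference does too, and conservation of energy (Lemma \ref{energy1}) gives $\|\E_g f_t-\E_f f_{t+s}\|_2=\|g-\E_f f_s\|_2$; no generator computation, no Dynkin formula, no localization. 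The paper's route is shorter and delegates every technical point to results already in hand (Theorem \ref{diff}, Lemma \ref{energy1}, Theorem \ref{moments1}); the price is that it says nothing about the structure of the submartingale. Your route is heavier --- the local-to-global martingale passage is the real work, and you correctly reduce it to the domination $Q(h)\leq Md\|h\|_1$ plus the Yule-comparison moment bounds of Corollary \ref{cortau} and Lemma \ref{yule} --- but it buys an explicit Doob--Meyer-type decomposition: your $Q$ is exactly the integrand of Lemma \ref{L2estimate}, so your single computation also recovers the identity $\frac{\d}{\d t}\E_f\|f_t-\bar f_t\|_2^2=\E_f Q(f_t)$ on which Theorem \ref{LLN} rests, and it would be the natural starting point for finer results (quadratic variation, fluctuation/CLT statements). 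Both proofs ultimately rest on the same two structural facts, skew-symmetry of $\L_G$ and preservation of the boundary condition on $V_1$, packaged at different levels.
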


We have shown in Theorem \ref{diff} that $\bar{f}_t$ solves IBVP \ref{Dirichlet1}, an equivalence form of the wave equation on the graph. Hence this LLN-type theorem states that for fixed graph, as the number of particles go to infinity, the scaled process converges to the solution of the wave equation on that graph. It should be noted that the bounds on fluctuations above are dimension-free, i.e. they hold no matter the graph has finite or infinite number of vertices. Besides, we have better results if the graph $G$ is finite. The time $t$ can also diverge as long as it goes to infinity slower than $c_N$, indicating that the long-time behavior of the system can be studied.

\begin{thm}\label{finitegraph}
	Let $A=2Md\sqrt{(|V|+|E|)M}$. For all $f\in F(G)$ and $t\geq 0$,
	\begin{equation}
	\E_f\|f_t-\bar{f}_t\|_2^2
	\leq\|f\|_2^2 (e^{\frac{At}{\|f\|_2}}-1).
	\end{equation}
	Recall the notations in Theorem \ref{LLN}. Then
	\begin{equation}
	\E_f\|f_t^N/c_N-g_t\|_2^2
	\leq \|\zeta-f_0^N/c_N\|_2^2
	+c_N^{-2}\|f_0^N\|_2^2(e^{\frac{At}{\|f_0^N\|_2}}-1).
	\end{equation}
	Therefore,
	\begin{equation}
	\begin{split}
	& \E_f\|f_t^N/c_N-g_t\|_2^2=O(t/c_N)\rightarrow 0,\\
	& P(\sup_{0\leq s\leq t}\|f_s^N/c_N-g_s\|_2^2>\delta)
	=O(t/(\delta c_N))\rightarrow 0.\\
	\end{split}
	\end{equation}
\end{thm}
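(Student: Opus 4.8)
The plan is to control the variance $V(t)=\E_f\|f_t-\bar f_t\|_2^2=\E_f\|f_t\|_2^2-\|\bar f_t\|_2^2$ through a differential inequality and then convert the resulting polynomial growth into the stated exponential form. First I would invoke the forward equation $\frac{\d}{\d t}\E_f\phi(f_t)=\E_f\A\phi(f_t)$, valid for $\phi=\|\cdot\|_2^2$ on a finite graph thanks to the moment bound in Theorem \ref{moments1}, and expand the generator using $\|f+\Delta\|_2^2-\|f\|_2^2=2[f,\Delta]_G+\|\Delta\|_2^2$. Summing over the two transition families of Definition \ref{defn} gives the decomposition $\A\|f\|_2^2=2[f,\tilde{\L}f]_G+\Gamma(f)$, where $\tilde{\L}$ is the drift operator ($\tilde{\L}f=\L_G f$ on $V_0\cup E$ and $\tilde{\L}f=0$ on $V_1$) identified from the master equation of Theorem \ref{diff}, and $\Gamma(f)=\sum_i r_i(f)\|\Delta_i\|_2^2\ge 0$ collects the squared jump sizes weighted by their rates.

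Next I would differentiate $V(t)=\E_f\|f_t\|_2^2-\|\bar f_t\|_2^2$ and use bilinearity to reduce the drift contributions to $2\E_f[f_t-\bar f_t,\tilde{\L}(f_t-\bar f_t)]_G$. The crucial observation is that the boundary values are frozen, so $f_t-\bar f_t$ vanishes identically on $V_1$ for every realization; hence on this difference $\tilde{\L}$ acts as the skew-symmetric operator $\L_G$ and the term is $[g,\L_G g]_G=0$. This leaves $\frac{\d}{\d t}V(t)=\E_f\Gamma(f_t)$, and a direct computation of the jump sizes gives
\begin{equation}
\Gamma(f)=\sum_{x\in V}\frac{|f(x)|}{m_x}\sum_{y\sim x}k_{xy}+\sum_{\l x,y\r\in E}k_{xy}|f(\l x,y\r)|\Big(\frac{\mathbf{1}_{x\in V_0}}{m_x}+\frac{\mathbf{1}_{y\in V_0}}{m_y}\Big).
\end{equation}
Since $\sum_{y\sim x}k_{xy}\le dM$ and $m_x^{-1}+m_y^{-1}\le 2M\le dM$, this yields $\Gamma(f)\le Md\|f\|_1$.

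Now I would exploit finiteness of $G$. Two applications of Cauchy--Schwarz with $\sum_x m_x^{-1}\le|V|M$ and $\sum_{\l x,y\r}k_{xy}\le|E|M$ give $\|f\|_1\le B\|f\|_2$ with $B=\sqrt{(|V|+|E|)M}$. Combining this with Jensen's inequality $\E_f\|f_t\|_2\le(\E_f\|f_t\|_2^2)^{1/2}$ and energy conservation $\|\bar f_t\|_2=\|f\|_2$ (the deterministic flow is $\|\cdot\|_2$-isometric on fields vanishing on $V_1$) produces $\frac{\d}{\d t}V(t)\le MdB\sqrt{V(t)+\|f\|_2^2}$ with $V(0)=0$. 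Solving the comparison ODE gives the polynomial bound $V(t)\le(\|f\|_2+\tfrac12 MdB\,t)^2-\|f\|_2^2=\|f\|_2^2(u+\tfrac14 u^2)$ with $u=MdB\,t/\|f\|_2$, and the elementary estimate $u+\tfrac14 u^2\le e^{2u}-1$ upgrades this to exactly $\|f\|_2^2(e^{At/\|f\|_2}-1)$ with $A=2Md\sqrt{(|V|+|E|)M}$, which is the first assertion.

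For the scaling statements I would use the orthogonal decomposition $\E_f\|f_t^N/c_N-g_t\|_2^2=c_N^{-2}\E\|f_t^N-\bar f_t^N\|_2^2+\|\bar f_t^N/c_N-g_t\|_2^2$, the cross term vanishing since $\E(f_t^N-\bar f_t^N)=0$. The first summand is bounded by the main estimate applied to $f_0^N$; the second equals $\|\zeta-f_0^N/c_N\|_2^2$ because $\bar f_t^N/c_N$ and $g_t$ solve the same linear problem (\ref{Dirichlet1}) and their ($V_1$-vanishing) difference is transported isometrically. Since $\|f_0^N\|_2\sim c_N\|\zeta\|_2\to\infty$, the fluctuation term behaves like $At\|f_0^N\|_2/c_N^2=O(t/c_N)$, and the probability bound follows from the sub-martingale maximal inequality of Lemma \ref{martingale} together with a triangle inequality absorbing the vanishing discrepancy $\|\zeta-f_0^N/c_N\|_2$. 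I expect the main obstacle to be the generator computation itself --- getting the orientations $e_{xy}$, the signs $\sgn(\cdot)$, and the $V_0/V_1$ bookkeeping correct so that the drift part is genuinely $2[f,\tilde{\L}f]_G$ --- after which the vanishing of the drift term on $f_t-\bar f_t$ and the Cauchy--Schwarz/Gr\"onwall steps are routine.
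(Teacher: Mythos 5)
Your proposal is correct, and it diverges from the paper's proof at the one place where the argument has real content. Both proofs share the same skeleton: the forward-equation/generator computation showing that the drift part cancels (so that $\frac{\d}{\d t}\E_f\|f_t\|_2^2\leq Md\,\E_f\|f_t\|_1$, the paper's Lemma \ref{L2estimate}), the finite-graph Cauchy--Schwarz bound $\|f\|_1\leq\sqrt{(|V|+|E|)M}\,\|f\|_2$, and the bias--variance decomposition plus energy conservation plus Lemma \ref{martingale} for the scaling and probability claims. Where you differ is in how the $L^1$ term is closed into a differential inequality. The paper splits $\E_f\|f_t\|_1$ over the events $\{\|f_t\|_1\leq C\|f\|_2^2\}$ and its complement, bounds the first piece by $C\,\E_f\|f_t\|_2^2$ using monotonicity of the energy and the second by $\frac{(|V|+|E|)M}{C\|f\|_2^2}\E_f\|f_t\|_2^2$, optimizes over $C$, and then applies the linear Gronwall inequality to get the exponential bound directly. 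You instead apply Jensen, $\E_f\|f_t\|_1\leq B\,(\E_f\|f_t\|_2^2)^{1/2}$, obtaining the square-root comparison ODE $\frac{\d}{\d t}V\leq MdB\sqrt{V+\|f\|_2^2}$, whose solution is \emph{quadratic} in $t$, and then relax $u+\tfrac14u^2\leq e^{2u}-1$ to match the stated exponential form. Your route is shorter and gives a strictly sharper estimate, $V(t)\leq\big(\|f\|_2+\tfrac12 MdBt\big)^2-\|f\|_2^2$, polynomial rather than exponential in $t$, which still yields the $O(t/c_N)$ rates; the paper's truncation-and-optimization trick buys nothing extra here beyond landing on the exponential form verbatim. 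A further small improvement in your write-up: you cancel the drift by applying skew-symmetry to $f_t-\bar f_t$, which vanishes on $V_1$ for every realization because the boundary values are frozen, rather than assuming $f|_{V_1}=0$ as the paper's Lemma \ref{L2estimate} does --- though note that your later use of energy conservation, $\|\bar f_t\|_2=\|f\|_2$, still implicitly requires the boundary data to vanish (an assumption the theorem's phrase ``for all $f\in F(G)$'' elides in the paper as well).
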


\subsection{Hydrodynamic limit theorems}
With the help of Theorems \ref{LLN} and \ref{finitegraph} we can establish hydrodynamic limit theorems for the wave equation in Euclidean spaces. The space is approximated by meshes on which IPSs are defined. We will consider simultaneous scaling of the space $n$ and number of particles $N$. Interesting phenomena arise in different scalings: $N/n\rightarrow \infty$, positive constant, and $0$. For simplicity, below we just present results for periodic initial value problem (Cauchy problem) in one space dimension, which is enough for demonstration. It can be easily extended to general cases.

To state the results, we need some notations. Let $H=L^2[0,1)$ be the Hilbert space of all the square integrable functions on $[0,1)$. For $u,v\in H$ we define $[u,v]_H=\int_{[0,1)}u(x)v(x) \d x$ and $\|u\|_H^2=[u,u]_H$.

Consider the Cauchy problem
\begin{equation}
\begin{cases}
&\partial_t^2 u(x,t)=\partial_x^2 u(x,t),~(x,t)\in\R\times\R_+,\\
&u(x,0)=\varphi(x),~\partial_t u(x,0)=\psi(x),~x\in\R.
\end{cases}
\end{equation}
Here $\varphi$ and $\psi$ are smooth and periodic functions: $\varphi(x)=\varphi(1+x)$ and $\psi(x)=\psi(1+x)$, $\forall x$. This problem has a unique classical solution
\begin{equation}
u(x,t)=\frac{1}{2}[\varphi(x+t)+\varphi(x-t)]+\frac{1}{2}\int_{x-t}^{x+t}\psi(s)\d s,~(x,t)\in\R\times\R_+.
\end{equation}
We can easily deduce that
\begin{equation}
\begin{split}
& \partial_t u(x,t)=\frac{1}{2}[\varphi'(x+t)-\varphi'(x-t)]+\frac{1}{2}[\psi(x+t)+\psi(x-t)],\\
& \partial_x u(x,t)=\frac{1}{2}[\varphi'(x+t)+\varphi'(x-t)]+\frac{1}{2}[\psi(x+t)-\psi(x-t)].
\end{split}
\end{equation}
To avoid trivial situations it is assumed that $(\varphi')^2+\psi^2$ is not always zero.

Define a sequence of graphs $\{G_n=(V_n,E_n,K_n,m_n)\}_{n=1}^{\infty}$, where
\begin{equation}
\begin{split}
&V_n=\{0,1,2,\cdots,n-1\},\\
&E_n=\{\l k,k+1\r: 0\leq k\leq n-1\},~~\text{$\l n-1,n\r$ is defined as $\l n-1,0\r$},\\
&(K_n)_{xy}=
\begin{cases}
&n, \mbox{if } \l x,y\r\in E, \\
&0, \mbox{otherwise},
\end{cases}~~(m_n)_k=\frac{1}{n},~\forall k\in V_n.
\end{split}
\end{equation}
For any $G_n$, we embed it in $\R$. $e_{+}$ refers to the unit vector pointing in the positive direction of $\R$. Let $f_t^{n,N}$ be the IPS on $G_n$ with initial state
\begin{equation}
\begin{split}
& f_0^{n,N}(k)=\lfloor N\psi(k/n)\rfloor,~k\in V_n,\\
& f_0^{n,N}(\l k,k+1\r)=\lfloor Nn( \varphi(k+1/n)-\varphi(k/n)) \rfloor e_{+},
\end{split}
\end{equation}
where $\lfloor x\rfloor$ refers to the largest integer not exceeding $x$. Define
\begin{equation}
\begin{split}
& v^{n,N}(x,t)=f_t^{n,N}(\lfloor nx\rfloor)/N,\\
& w^{n,N}(x,t)=\frac{1}{N}f_t^{n,N}(\l \lfloor nx\rfloor,\lfloor nx\rfloor+1\r)\cdot e_{+},\\
& u^{n,N}(x,t)=\varphi(x)+\int_0^t v^{n,N}(x,s)\d s,~(x,t)\in [0,1)\times\R_+.
\end{split}
\end{equation}

\begin{thm}\label{periodicivp}
	Define
	\begin{equation}
	Err^{n,N}(t)=\E\|v^{n,N}(\cdot,t)-\partial_t u(\cdot,t)\|_H^2+\E\|w^{n,N}(\cdot,t)-\partial_x u(\cdot,t)\|_H^2.
	\end{equation}
	Consider $n,N\rightarrow+\infty$. If $N/n\rightarrow+\infty$, then for any fixed $T\geq 0$,
	\begin{equation}
	\sup_{0\leq t\leq T}Err^{n,N}(t)\rightarrow 0
	\end{equation}
	and as a result,
	\begin{equation}
	\E\Big(\sup_{0\leq t\leq T}\|u^{n,N}(\cdot,t)-u(\cdot,t)\|_H^2\Big)\rightarrow 0.
	\end{equation}
	If $N/n\rightarrow C\in (0,+\infty)$, then for any fixed $t\geq 0$ the sequence $\{Err^{n,N}(t)\}$ is bounded but does not converge to zero. For any $\eta\in H$, $T\geq 0$,
	\begin{equation}
	\E\Big(\sup_{0\leq t\leq T}[u^{n,N}(\cdot,t)-u(\cdot,t),\eta(\cdot)]_H^2\Big)\rightarrow 0.
	\end{equation}
	If $N/n\rightarrow 0$, the for any fixed $t\geq 0$ sequence $\{Err^{n,N}(t)\}$ is unbounded.
\end{thm}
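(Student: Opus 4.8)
The plan is to split $Err^{n,N}(t)$ by a bias--variance (Pythagorean) decomposition. Writing $\bar v^{n,N},\bar w^{n,N}$ for the fields built from $\bar f^{n,N}_t=\E f^{n,N}_t$, the facts $\E v^{n,N}=\bar v^{n,N}$, $\E w^{n,N}=\bar w^{n,N}$ kill the cross terms and give
\[
Err^{n,N}(t)=\Big(\E\|v^{n,N}-\bar v^{n,N}\|_H^2+\E\|w^{n,N}-\bar w^{n,N}\|_H^2\Big)+\Big(\|\bar v^{n,N}-\partial_t u\|_H^2+\|\bar w^{n,N}-\partial_x u\|_H^2\Big),
\]
a sum of a nonnegative fluctuation part and a nonnegative bias part. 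The basic bookkeeping identity is $\|v_F\|_H^2+\|w_F\|_H^2=n^{-2}\|F\|_2^2$ for any $F\in F(G_n)$ (piecewise-constant interpolation), which converts the continuous $H$-norms into the graph norm on $G_n$; note that for $G_n$ one has $M=n$, $d=2$, so $Md=2n$.

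For the bias I would show the deterministic field $\bar f^{n,N}_t/N$, which solves the graph wave equation by Theorem~\ref{diff}, converges to the sampled continuous solution $S^n_t$ with $S^n_t(k)=\partial_t u(k/n,t)$ and $S^n_t(\langle k,k+1\rangle)=\partial_x u(k/n,t)e_+$. Setting $p=\partial_t u$, $q=\partial_x u$ turns the Cauchy problem into $\partial_t p=\partial_x q$, $\partial_t q=\partial_x p$, and $\L_{G_n}$ is exactly a finite-difference discretization of this system, so $\frac{\d}{\d t}S^n_t=\L_{G_n}S^n_t+R^n_t$ with truncation residual $\|R^n_t\|_\infty=O(1/n)$ controlled by the bounded third derivatives of the smooth solution $u$. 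Since $\L_{G_n}$ is skew-symmetric, the energy estimate $\frac{\d}{\d t}\|\bar f^{n,N}_t/N-S^n_t\|_2\le\|R^n_t\|_2$, together with the $O(1/N)$ rounding and $O(1/n)$ discretization of the initial data, yields $\|\bar f^{n,N}_t/N-S^n_t\|_2^2=O((1+t)^2)$ in the graph norm, hence $O((1+t)^2/n^2)$ in $H$; adding the $O(1/n)$ gap between $S^n_t$ and $(\partial_t u,\partial_x u)$ gives bias $=O((1+t)^2/n^2)+O(1/N^2)\to0$ uniformly on $[0,T]$ in all three regimes (both $n,N\to\infty$).

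The fluctuation carries the phase transition. The upper bound is immediate from Theorem~\ref{finitegraph}: with $A\asymp n^2$ and $\|f^{n,N}_0\|_2\asymp Nn$ one obtains fluctuation $=n^{-2}N^{-2}\|f^{n,N}_0\|_2^2\big(e^{At/\|f^{n,N}_0\|_2}-1\big)=O(nt/N)$ when $N/n\to\infty$ and $O(1)$ when $N/n\to C$. For the matching lower bound I would compute the square-field term: applying $\A$ to $\|\cdot\|_2^2$, skew-symmetry annihilates the drift $-2[f,\L_{G_n}f]_{G_n}=0$ and leaves the nonnegative quantity that equals $2n\|f\|_1$ for $G_n$, whence $\frac{\d}{\d t}\E\|f^{n,N}_t\|_2^2=2n\,\E\|f^{n,N}_t\|_1\ge 2n\|\bar f^{n,N}_t\|_1$ by Jensen. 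Because $\bar f^{n,N}_t/N$ is $L^2$-close to $S^n_t$ and the conserved energy $\|\partial_t u\|_{L^2}^2+\|\partial_x u\|_{L^2}^2$ is a positive constant (non-triviality), $\|S^n_t\|_1\gtrsim n^2$, so $\|\bar f^{n,N}_t\|_1\gtrsim Nn^2$ and fluctuation $\gtrsim nt/N$. Thus fluctuation $=\Theta(nt/N)$, and since the bias vanishes, $Err^{n,N}(t)$ tends to $0$, stays bounded away from $0$, or blows up according to $N/n\to\infty,\,C,\,0$. The convergence of $u^{n,N}$ in the first regime then follows from $\sup_{t\le T}\|u^{n,N}-u\|_H^2\le T\int_0^T\|v^{n,N}-\partial_t u\|_H^2\,\d s$ by Cauchy--Schwarz in time, after taking expectations.

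The delicate case is weak convergence when $N/n\to C$: the $L^2$ fluctuation is $\Theta(1)$, yet testing against a fixed $\eta$ and integrating in time must vanish. Writing $[u^{n,N}-\E u^{n,N},\eta]_H=\int_0^t W_s\,\d s$ with $W_s=N^{-1}[f_s-\bar f_s,g]_{G_n}$ for the node test field $g$ dual to $\eta$, I would use $\E\sup_{t\le T}\big(\int_0^tW_s\,\d s\big)^2\le T\int_0^T\E W_s^2\,\d s$ and prove $\E W_s^2=N^{-2}\var([f_s,g]_{G_n})\to0$. The obstacle is that the generator identity for $\frac{\d}{\d s}\var([f_s,g]_{G_n})$ produces, besides the square-field term $\E\Gamma_g(f_s)=O(N/n^2)$ (small because it weights the squared discrete gradient $(g(k)-g(k+1))^2=O(n^{-6})$ of the slowly varying $g$), a cross term $-2\cov([f_s,g]_{G_n},[f_s,\L_{G_n}g]_{G_n})$ that couples node and edge fluctuations through the wave hierarchy. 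To close it I would first reduce to $\eta$ a trigonometric polynomial by density, using the uniform $L^2$ bound supplied by the fluctuation upper bound; then Fourier-diagonalize $\L_{G_n}$ on the cycle, where each frequency spans a two-dimensional $\L_{G_n}$-invariant subspace on which $\L_{G_n}^2=-\omega_\xi^2$ with $\omega_\xi=2n\sin(\pi\xi/n)\to2\pi\xi$ bounded. Consequently the variances and covariance of $\{[f_s,g]_{G_n},[f_s,\L_{G_n}g]_{G_n}\}$ obey a closed $3\times3$ linear ODE with bounded coefficients, forcing $O(N/n^2)$ and zero initial data, giving $\var([f_s,g]_{G_n})=O(N/n^2)$ and hence $\E W_s^2=O(1/(Nn^2))\to0$. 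This variance control for the smeared, time-integrated fluctuation is the main difficulty; the rest is energy estimates and Gronwall.
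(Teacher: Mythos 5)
Your proposal is correct and follows essentially the same route as the paper: the same bias--variance decomposition with the discretization error handled by standard semi-discrete scheme analysis, the same use of Theorem \ref{finitegraph} for the fluctuation upper bound and of Lemma \ref{L2estimate} plus Jensen ($\E\|f_t\|_1\geq\|\E f_t\|_1$, with the $L^1$ norm of the mean bounded below via energy conservation and non-triviality) for the lower bound driving the phase transition, and the same reduction of the critical case $N/n\to C$ to Fourier modes $g_n$ satisfying $\L_{G_n}^2 g_n=-\lambda_n g_n$. Your closed $3\times 3$ second-moment/covariance system is simply an unpacked form of the paper's cancellation device (Lemma \ref{finallem} applied to both $g_n$ and $\L_{G_n}g_n$, so that in the combination $\lambda_n[f^{n,N}_t,g_n]_{G_n}^2+[f^{n,N}_t,\L_{G_n}g_n]_{G_n}^2$ the covariance terms cancel, leaving only the carr\'e-du-champ sources of size $O(n^2N)$), so the two arguments coincide in substance.
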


Theorem \ref{periodicivp} describes a phase transition phenomenon for scaling limits. Roughly speaking, if the number of particles per site grows faster than the number of sites, the interacting particle system converges in $L^2$ to the solution of the corresponding wave equation; if slower, then the system diverges in some sense. The critical case is when they grow at the same order: the $L^2$ fluctuation neither vanishes nor goes to infinity, and convergence happens in a weak sense.

\section{Proofs of $L^2$ estimates}\label{proofLLN}
\subsection{Proofs of Theorem \ref{LLN} and Lemma \ref{martingale}}
To prove Theorem \ref{LLN} we study the time derivative of $\E_f\|f_t\|_2^2$ which is closely related to the energy and the $L^2$ error. First we present a lemma on the conservation of energy.
\begin{lem}\label{energy1}
	Suppose $v_t(\cdot)\triangleq v(\cdot,t)$ solves IBVP (\ref{Dirichlet1}) and $\|\zeta\|_2<\infty$. Then $\|v_t\|_2=\|\zeta\|_2$, $\forall t\geq 0$.
\end{lem}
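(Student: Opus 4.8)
The plan is to show that the energy $\|v_t\|_2^2=[v_t,v_t]_G$ is constant in $t$ by differentiating it and using the skew-symmetry of $\L_G$. The heart of the matter is the identity $[f,\L_G f]_G=0$ for $f\in L^2(G)$, already recorded after the definition of $\L_G$. The boundary condition $v_t|_{V_1}=0$ is precisely what allows this identity to be invoked even though the equation $\frac{\d}{\d t}v=\L_G v$ only holds off $V_1$.

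Before manipulating the (possibly infinite) defining sum of $[\cdot,\cdot]_G$, I would first pin down the regularity of $t\mapsto v_t$ as a curve in the Hilbert space $(L^2(G),[\cdot,\cdot]_G)$. Under the standing assumptions (\ref{regularity}), $\L_G$ is bounded on $L^2(G)$: a direct Cauchy--Schwarz estimate using $\#\{y:y\sim x\}\le d$ and $m_x^{-1},k_{xy}\le M$ yields $\|\L_G f\|_2\le C(M,d)\|f\|_2$. Since $\zeta|_{V_1}=0$ and $v$ is frozen at $0$ on $V_1$, the dynamics in (\ref{Dirichlet1}) is equivalent to $\frac{\d}{\d t}v_t=\tilde\L_G v_t$ on all of $V\cup E$, where $\tilde\L_G$ agrees with $\L_G$ on $V_0\cup E$ and is set to $0$ on $V_1$; this $\tilde\L_G$ is again bounded and leaves the closed subspace $H_1=\{h\in L^2(G):h|_{V_1}=0\}$ invariant. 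Consequently $v_t=e^{t\tilde\L_G}\zeta\in H_1$ is continuously differentiable in the $L^2$ norm with $\dot v_t=\tilde\L_G v_t\in L^2(G)$, which legitimizes differentiating the energy and interchanging $\frac{\d}{\d t}$ with the sums.

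With this in hand the computation is short. Since $[\cdot,\cdot]_G$ is a continuous symmetric bilinear form and $t\mapsto v_t$ is $C^1$ in $L^2$,
\begin{equation}
\frac{\d}{\d t}\|v_t\|_2^2=\frac{\d}{\d t}[v_t,v_t]_G=2[v_t,\dot v_t]_G=2[v_t,\tilde\L_G v_t]_G.
\end{equation}
Now $\tilde\L_G v_t$ and $\L_G v_t$ differ only on $V_1$, but $v_t$ vanishes there, so those terms drop out of the inner product and $[v_t,\tilde\L_G v_t]_G=[v_t,\L_G v_t]_G$. Skew-symmetry then gives $[v_t,\L_G v_t]_G=0$, so $\|v_t\|_2^2$ is constant and equals $\|v_0\|_2^2=\|\zeta\|_2^2$; taking square roots finishes the proof.

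The step I expect to be the genuine obstacle is the second one: for an infinite graph one must first guarantee that the energy is norm-differentiable and $\dot v_t\in L^2(G)$ before touching infinite sums, whereas for a finite graph this is automatic and the remaining argument is purely algebraic. The only auxiliary work is the boundedness estimate for $\L_G$ on $L^2(G)$ and the check that $\tilde\L_G$ remains skew-symmetric on $H_1$; the latter follows by the same computation as for $\L_G$, since every discrepancy term between $\tilde\L_G$ and $\L_G$ is weighted by a value of the relevant function on $V_1$, hence vanishes against test functions in $H_1$.
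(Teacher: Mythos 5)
Your proof is correct and follows essentially the same route as the paper's: differentiate $\|v_t\|_2^2$ and kill the derivative using skew-symmetry of $\L_G$, with the boundary condition $v_t|_{V_1}=0$ ensuring the $V_1$ discrepancy terms vanish from the inner product. The only difference is that you explicitly justify the norm-differentiation via boundedness of $\L_G$ and the semigroup $e^{t\tilde{\L}_G}$, a regularity step the paper's one-line proof leaves implicit.
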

\begin{proof}
	Note that $\frac{\d}{\d t}v_t=\L_G v_t$ in $V_0\cup E$, $v_t=0$ in $V_1$, and $\L_G$ is skew-symmetric. We have
	\begin{equation}
	\begin{split}
	\frac{\d}{\d t}\|v_t\|_2^2=\frac{\d}{\d t}[v_t,v_t]_G=2[v_t,\frac{\d}{\d t}v_t]_G=2[v_t,\L_G v_t]_G=0.
	\end{split}
	\end{equation}
\end{proof}

\begin{lem}\label{L2estimate}
	If $f\in F_0(G)$, $f|_{V_1}=0$, then
	\begin{equation}\label{L2}
	\frac{\d}{\d t}\E_f\|f_t\|_2^2=\sum_{x\in V}\frac{\E_f|f_t(x)|}{m_x}\Big(\sum_{y\sim x}k_{xy}\Big)+\sum_{\l x,y\r\in E}k_{xy}\Big(\frac{\mathbf{1}_{V_0}(x)}{m_x}+\frac{\mathbf{1}_{V_0}(y)}{m_y}\Big)
	\E_f|f_t(\l x,y\r)|,
	\end{equation}
	where for $x\in V$ we define
	\begin{equation}
	\mathbf{1}_{V_0}(x)=
	\begin{cases}
	&1,~x\in V_0,\\
	&0,~x\in V_1.
	\end{cases}
	\end{equation}
	As a result,
	\begin{equation}
	0\leq\frac{\d}{\d t}\E_f\|f_t\|_2^2\leq Md\cdot\E_f\|f_t\|_1,~\forall t\geq 0.
	\end{equation}
\end{lem}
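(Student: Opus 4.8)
The plan is to apply the forward (Dynkin) equation $\frac{\d}{\d t}\E_f\phi(f_t)=\E_f\big(\A\phi(f_t)\big)$ — the same routine procedure invoked in the proof of Theorem \ref{diff}, legitimate here because Theorem \ref{moments1} guarantees $\E_f\|f_t\|_2^2<\infty$ — to the functional $\phi(f)=\|f\|_2^2=\sum_{x\in V}\frac{|f(x)|^2}{m_x}+\sum_{\l x,y\r\in E}k_{xy}|f(\l x,y\r)|^2$, so that everything reduces to computing $\A\|f\|_2^2$ directly from the generator. First I would record the invariance $f_t|_{V_1}=0$: the node-driven transition alters only edge values, and the edge-driven transition alters only $V_0$-node values (since $\hat\delta_x=\hat\delta_y=0$ on $V_1$), so the constraint $f_0|_{V_1}=0$ propagates to all $t$ almost surely. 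This invariance is used in the cancellation below.

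Next I would expand the two families of jumps. For the node-driven jump $f\mapsto f+\sgn(f(x))\sum_{y\sim x}\delta_{yx}$ at rate $|f(x)|/m_x$, only the edges incident to $x$ change, and since $e_{yx}$ is a unit vector, $|f(\l x,y\r)+\sgn(f(x))e_{yx}|^2-|f(\l x,y\r)|^2=2\sgn(f(x))\,f(\l x,y\r)\cdot e_{yx}+1$. For the edge-driven jump $f\mapsto f+\sgn(f(\l x,y\r)\cdot e_{xy})(\hat\delta_x-\hat\delta_y)$ at rate $k_{xy}|f(\l x,y\r)|$, only the two endpoint node values change, contributing $\frac{\mathbf{1}_{V_0}(x)(2sf(x)+1)}{m_x}+\frac{\mathbf{1}_{V_0}(y)(-2sf(y)+1)}{m_y}$ with $s=\sgn(f(\l x,y\r)\cdot e_{xy})$. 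Multiplying each increment by its rate and summing, the constant ``$+1$'' pieces assemble exactly into the two sums claimed in (\ref{L2}), so it remains only to show that the linear (cross) pieces cancel.

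The cross pieces are the heart of the lemma and reflect the skew-symmetry of $\L_G$. Using $|f(\l x,y\r)|\,s=f(\l x,y\r)\cdot e_{xy}$ together with $e_{yx}=-e_{xy}$, I would reorganize the node-driven cross term as a sum over unordered edges (pairing the ordered contributions of $(x,y)$ and $(y,x)$), obtaining $\sum_{\l x,y\r\in E}2k_{xy}(f(\l x,y\r)\cdot e_{xy})\big[\frac{f(y)}{m_y}-\frac{f(x)}{m_x}\big]$; the edge-driven cross term, after invoking the invariance $f_t|_{V_1}=0$ to replace $\mathbf{1}_{V_0}(x)f(x)$ by $f(x)$, equals $\sum_{\l x,y\r\in E}2k_{xy}(f(\l x,y\r)\cdot e_{xy})\big[\frac{f(x)}{m_x}-\frac{f(y)}{m_y}\big]$. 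These are negatives of one another, hence cancel, and (\ref{L2}) follows after taking $\E_f$ and swapping sum with expectation by Tonelli (all summands nonnegative).

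Finally, the bounds are immediate from (\ref{L2}). Every summand on the right is nonnegative, so $\frac{\d}{\d t}\E_f\|f_t\|_2^2\geq 0$. For the upper bound I would estimate the two kernels using (\ref{regularity}): $\sum_{y\sim x}k_{xy}\leq d_0 M\leq Md$ and $\frac{\mathbf{1}_{V_0}(x)}{m_x}+\frac{\mathbf{1}_{V_0}(y)}{m_y}\leq 2M\leq Md$, whence each of the two sums is bounded by $Md$ times the corresponding piece of $\|f_t\|_1$, giving $\frac{\d}{\d t}\E_f\|f_t\|_2^2\leq Md\,\E_f\|f_t\|_1$. I expect the only genuine obstacle to be the rigorous justification of the forward equation for the unbounded functional $\|\cdot\|_2^2$ on the countable state space $F_0(G)$; this rests on non-explosiveness (Theorem \ref{nonexplosive}) and the moment bounds (Theorem \ref{moments1}), which control the infinite sums appearing in $\A\|f\|_2^2$ and license differentiating under the expectation.
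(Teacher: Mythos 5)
Your proposal is correct and follows essentially the same route as the paper: apply the forward equation to $\phi(f)=\|f\|_2^2$, expand the generator over the two jump types so the ``$+1$'' terms produce the right-hand side of (\ref{L2}), observe that the invariance $f_t|_{V_1}=0$ lets the cross terms cancel, and then bound the kernels by $Md$ using (\ref{regularity}). The only cosmetic difference is that you cancel the cross terms by explicit pairing over unordered edges, whereas the paper identifies their sum as $-2[f,\L_G f]_G$ and invokes the skew-symmetry of $\L_G$ — the same computation in different clothing.
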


\begin{proof}
	Define $\phi:F_0(G)\rightarrow\R$, $f\mapsto \|f\|_2^2$. A routine procedure will show that the forward equation $\frac{\d}{\d t}\E_f \phi(f_t)=\E_f (\A\phi(f_t))$ holds. Note that
	\begin{equation}
	\begin{split}
	&\A \phi(f)=
	\sum_{x\in V}\frac{|f(x)|}{m_x}
	\sum_{y\sim x}k_{xy}
	\Big(|f(\l x,y\r)+\sgn(f(x))e_{yx}|^2-|f(\l x,y\r)|^2\Big)\\
	&+\sum_{\l x,y\r\in E}k_{xy}|f(\l x,y\r)|
	\Big\{
	\frac{1}{m_x}\Big(
	[f(x)+\sgn(f(\l x,y\r)\cdot e_{xy})\mathbf{1}_{V_0}(x)]^2-f^2(x)
	\Big)\\
	&+\frac{1}{m_y}\Big(
	[f(y)-\sgn(f(\l x,y\r)\cdot e_{xy})\mathbf{1}_{V_0}(y)]^2-f^2(y)
	\Big)\Big\}\\
	&=\sum_{x\in V}\frac{|f(x)|}{m_x}
	\sum_{y\sim x}k_{xy}
	\Big(2\sgn(f(x))f(\l x,y\r)\cdot e_{yx}+1\Big)\\
	&+\sum_{\l x,y\r\in E}k_{xy}|f(\l x,y\r)|
	\Big\{
	\frac{1}{m_x}\Big(
	2f(x)\sgn(f(\l x,y\r)\cdot e_{xy})\mathbf{1}_{V_0}(x)+\mathbf{1}_{V_0}(x)\Big)\\
	&+\frac{1}{m_y}\Big(
	-2f(y)\sgn(f(\l x,y\r)\cdot e_{xy})\mathbf{1}_{V_0}(y)+\mathbf{1}_{V_0}(y)
	\Big)\Big\}.\\
	\end{split}
	\end{equation}
	If $f|_{V_1}=0$, then $f(x)=f(x)\mathbf{1}_{V_0}(x)$, and we continue to write
	\begin{equation}
	\begin{split}
	&\A \phi(f)=\sum_{x\in V}\frac{|f(x)|}{m_x}\sum_{y\sim x}k_{xy}
	+\sum_{\l x,y\r\in E}k_{xy}|f(\l x,y\r)|
	\Big(
	\frac{\mathbf{1}_{V_0}(x)}{m_x}+\frac{\mathbf{1}_{V_0}(y)}{m_y}\Big)\\
	&+2\sum_{x\in V}\frac{f(x)}{m_x}\sum_{y\sim x}
	k_{xy}e_{yx}\cdot f(\l x,y\r)
	+2\sum_{\l x,y\r\in E}k_{xy} e_{xy}\cdot f(\l x,y\r)
	\Big(\frac{f(x)}{m_x}-\frac{f(y)}{m_y}\Big).
	\end{split}
	\end{equation}
	Recall the definition of operator $\L_G$. We see the sum of the last two terms above is exactly $-2[f,\L_G f]_G$, which is $0$ since $\L_G$ is skew-symmetric. On the other hand, $f_t|_{V_1}=0$ for all $t$ by definition. So $\A\phi(f_t)$ can be computed using the formula above, and the forward equation for $\E_f \phi(f_t)$ yields the equation we want. Since
	\begin{equation}
	\begin{split}
	&\sum_{y\sim x}k_{xy}\leq \#\{y:y\sim x\}\cdot\max_{\l u,v\r\in E}\{k_{uv}\}\leq Md,\\
	&\frac{\mathbf{1}_{V_0}(x)}{m_x}+\frac{\mathbf{1}_{V_0}(y)}{m_y}\leq 2\max_{z\in V}\{m_z^{-1}\}\leq Md,
	\end{split}
	\end{equation}
	we have
	\begin{equation}
	0\leq\frac{\d}{\d t}\E_f\|f_t\|_2^2\leq Md\cdot\E_f\|f_t\|_1,~\forall t\geq 0.
	\end{equation}
\end{proof}

This lemma leads to $L^2$ estimates of the solution. From the proof above we also see that the upper bound in Lemma \ref{L2estimate} is sharp. \textbf{Now we prove Lemma \ref{martingale}}.
\begin{proof}
	The inequality follows from Doob's martingale inequality and the conservation of energy in the wave equation. Here we only show that $\{\|f_t-\E_f f_t\|_2^2: t\geq 0\}$ is a sub-martingale.
	First the integrability follows from Theorem \ref{moments1}. Note that for $f,g\in F_0(G)$, $s,t>0$, the Markov property yields
	\begin{equation}\label{transition}
	\E_f\Big(\|f_{t+s}-\E_f f_{t+s}\|_2^2 \Big| f_s=g\Big)
	=\E_g\|f_t-\E_f f_{t+s}\|_2^2\geq \|\E_g f_t-\E_f f_{t+s}\|_2^2.
	\end{equation}
	By Theorem \ref{diff}, $\E_g f_t$ and $\E_f f_{t+s}$ both solve the system
	\begin{equation}
	\begin{cases}
	&\frac{\d v}{\d t}(\xi,t)=\L_G v(\xi,t),~(\xi,t)\in (V_0\cup E)\times[0,+\infty),\\
	&v(x,t)=0,~(x,t)\in V_1\times[0,+\infty),
	\end{cases}
	\end{equation}
	with different initial data $g$ and $\E_f f_s$ respectively. Then $\E_g f_t-\E_f f_{t+s}$ solves the IBVP
	\begin{equation}
	\begin{cases}
	&\frac{\d v}{\d t}(\xi,t)=\L_G v(\xi,t),~(\xi,t)\in (V_0\cup E)\times[0,+\infty),\\
	&v(\cdot,0)=g-\E_f f_s,\\
	&v(x,t)=0,~(x,t)\in V_1\times[0,+\infty).
	\end{cases}
	\end{equation}
	Lemma \ref{energy1} implies that
	\begin{equation}\label{transition2}
	\|\E_g f_t-\E_f f_{t+s}\|_2=\|g-\E_f f_s\|_2.
	\end{equation}
	Let $\mathscr{F}_t$, $\mathscr{G}_t$ be $\sigma-$fields generated by $\{f_s:0\leq s\leq t\}$ and $\{\|f_s-\E_f f_s\|_2^2:0\leq s\leq t\}$, respectively. Then $\mathscr{G}_t\subset\mathscr{F}_t$, and
	\begin{equation}
	\begin{split}
	& \E(\|f_{t+s}-\E_f f_{t+s}\|_2^2 | \mathscr{G}_s)
	=\E\Big(\E(\|f_{t+s}-\E_f f_{t+s}\|_2^2 | \mathscr{F}_s)\Big|\mathscr{G}_s\Big) \\
	&=\E\Big(\E(\|f_{t+s}-\E_f f_{t+s}\|_2^2 | f_s)\Big|\mathscr{G}_s\Big)\\
	&\geq\E\Big(\|f_s-\E_f f_s\|_2^2\Big|\mathscr{G}_s\Big),~~by~~(\ref{transition})~~and~~(\ref{transition2})\\
	&=\|f_s-\E_f f_s\|_2^2,
	\end{split}
	\end{equation}
	which completes the proof.
\end{proof}

\textbf{Now we are ready to come back to Theorem \ref{LLN}}.
\begin{proof}
	Lemma \ref{energy1} implies that $\|\bar{f}_t\|_2=\|f\|_2,~\forall t$. As a result,
	\begin{equation}
	\begin{split}
	\E_f\|f_t-\bar{f}_t\|_2^2
	=\E_f\|f_t\|_2^2-\|\bar{f}_t\|_2^2
	=\E_f\|f_t\|_2^2-\|f\|_2^2.
	\end{split}
	\end{equation}
	By Lemma \ref{L2estimate} and Lemma \ref{L1prior},
	\begin{equation}
	\begin{split}
	&\frac{\d}{\d t}\E_f\|f_t-\bar{f}_t\|_2^2
	=\frac{\d}{\d t}\E_f\|f_t\|_2^2
	\leq Md\E_f\|f_t\|_1
	\leq Md\E_f (Md\eta_t+\|f\|_1).
	\end{split}
	\end{equation}
	Let $r$ be the smallest integer that is larger than or equal to $\frac{\|f\|_1}{Md}$. By Corollary \ref{cortau} and Lemma \ref{yule},
	\begin{equation}
	\begin{split}
	&\E_f\eta_t\leq \E \tilde{\eta}_t(Md,r)=re^{Mdt}
	\leq \Big( \frac{\|f\|_1}{Md}+1\Big)e^{Mdt},
	\end{split}
	\end{equation}
	Therefore,
	\begin{equation}
	\begin{split}
	&\E_f\|f_t-\bar{f}_t\|_2^2=\int_{0}^{t}\frac{\d}{\d s}\E_f\|f_s-\bar{f}_s\|_2^2\d s\leq \int_{0}^{t}Md\Big(\|f\|_1+(\|f\|_1+Md)e^{Mds}\Big)\d s\\
	&\leq Mdt\|f\|_1+(\|f\|_1+Md)e^{Mdt}.
	\end{split}
	\end{equation}
	Together with the ``bias-variance" decomposition and Theorem \ref{energy1},
	\begin{equation}
	\begin{split}
	&\E_f\|f_t^N/c_N-g_t\|_2^2
	=\|g_t-\E_f(f_t^N/c_N)\|_2^2+\E_f\|f_t^N/c_N-\E_f(f_t^N/c_N)\|_2^2\\
	&\leq \|\zeta-f_0^N/c_N\|_2^2+\frac{Mdt}{c_N^2}\|f_0^N\|_1+\frac{1}{c_N^2}(\|f_0^N\|_1+Md)e^{Mdt},~\forall t.
	\end{split}
	\end{equation}
	The bound on probability is a direct corollary of the result above and Lemma \ref{martingale}.
\end{proof}

\subsection{Proof of Theorem \ref{finitegraph}}
\begin{proof}
	We first investigate
	\begin{equation}
	\E_f \|f_t\|_1=\E_f\|f_t\|_1=\E_f(\|f_t\|_1\mathbf{1}_{\{\|f_t\|_1\leq C\|f\|_2^2\}})+\E_f(\|f_t\|_1\mathbf{1}_{\{\|f_t\|_1>C\|f\|_2^2\}}),~\forall C>0.
	\end{equation}
	Since $\frac{\d}{\d t}\E_f\|f_t\|_2^2\geq 0$, we have
	\begin{equation}\label{L1small}
	\begin{split}
	&\E_f(\|f_t\|_1\mathbf{1}_{\{\|f_t\|_1\leq C\|f\|_2^2\}})\leq C\|f\|_2^2=C\E_f\|f_0\|_2^2
	\leq C\E_f\|f_t\|_2^2.
	\end{split}
	\end{equation}
	Note that $\forall f\in F(G)$
	\begin{equation}\label{csineq}
	\begin{split}
	&\|f\|_1^2=\Big(\sum_{x\in V}\frac{|f(x)|}{\sqrt{m_x}}\cdot \frac{1}{\sqrt{m_x}}+\sum_{\l x,y\r\in E}\sqrt{k_{xy}}|f(\l x,y\r)|\cdot \sqrt{k_{xy}}\Big)^2\\
	&\leq\Big(\sum_{x\in V}m_x^{-1}+\sum_{\l x,y\r\in E}k_{xy}\Big)\cdot
	\Big(\sum_{x\in V}\frac{|f(x)|^2}{m_x}+\sum_{\l x,y\r\in E}k_{xy}|f(\l x,y\r)|^2\Big)\\
	&\leq (|V|+|E|)M\|f\|_2^2.
	\end{split}
	\end{equation}
	When $\|f_t\|_1>C\|f\|_2^2$, we have
	\begin{equation}
	\|f_t\|_1\leq \frac{\|f_t\|_1^2}{C\|f\|_2^2}\leq \frac{(|V|+|E|)M}{C\|f\|_2^2}\|f_t\|_2^2.
	\end{equation}
	As a result,
	\begin{equation}
	\E_f(\|f_t\|_1\mathbf{1}_{\{\|f_t\|_1>C\|f\|_2^2\}})
	\leq \frac{(|V|+|E|)M}{C\|f\|_2^2}\E_f\|f_t\|_2^2.
	\end{equation}
	Together with (\ref{L1small}),
	\begin{equation}
	\E_f\|f_t\|_1\leq \Big(C+\frac{(|V|+|E|)M}{C\|f\|_2^2}\Big)\E_f\|f_t\|_2^2,~\forall C>0.
	\end{equation}
	The right hand side obtains its minimum when $C=\sqrt{M(|V|+|E|)}/\|f\|_2$. By plugging in this value we derive
	\begin{equation}
	\frac{\d}{\d t}\E_f\|f_t\|_2^2
	\leq Md\E_f\|f_t\|_1
	\leq
	\frac{2Md\sqrt{(|V|+|E|)M}}{\|f\|_2}\E_f\|f_t\|_2^2
	=\frac{A}{\|f\|_2}\E_f\|f_t\|_2^2.
	\end{equation}
	Here $A=2Md\sqrt{(|V|+|E|)M}$. Gronwall's inequality forces
	\begin{equation}
	\E_f\|f_t\|_2^2\leq \|f\|_2^2 \exp\Big(\frac{At}{\|f\|_2}\Big).
	\end{equation}
	Therefore,
	\begin{equation}
	\E_f\|f_t-\bar{f}_t\|_2^2
	\leq\|f\|_2^2 (e^{\frac{At}{\|f\|_2}}-1).
	\end{equation}
	Together with the ``bias-variance" decomposition and Theorem \ref{energy1},
	\begin{equation}
	\begin{split}
	&\E_f\|f_t^N/c_N-g_t\|_2^2
	=\|g_t-\E_f(f_t^N/c_N)\|_2^2+\E_f\|f_t^N/c_N-\E_f(f_t^N/c_N)\|_2^2\\
	&\leq \|\zeta-f_0^N/c_N\|_2^2
	+c_N^{-2}\|f_0^N\|_2^2(e^{\frac{At}{\|f_0^N\|_2}}-1).
	\end{split}
	\end{equation}
	
	Lemma \ref{martingale} yields the bound on probability.
\end{proof}

\section{Proof of Theorem \ref{periodicivp}}\label{proofimportant}
By bias-variance decomposition,
\begin{equation}\label{errnN}
\begin{split}
& Err^{n,N}(t)
=\|\E v^{n,N}(\cdot,t)-\partial_t u(\cdot,t)\|_H^2+
\E\|v^{n,N}(\cdot,t)-\E v^{n,N}(\cdot,t)\|_H^2\\
&+\|\E w^{n,N}(\cdot,t)-\partial_x u(\cdot,t)\|_H^2
+\E\|w^{n,N}(\cdot,t)-\E w^{n,N}(\cdot,t)\|_H^2\\
&=\|\E v^{n,N}(\cdot,t)-\partial_t u(\cdot,t)\|_H^2+
\|\E w^{n,N}(\cdot,t)-\partial_x u(\cdot,t)\|_H^2
+(n^2N^2)^{-1}\E\|f^{n,N}_t-\E f^{n,N}_t\|_{L^2(G_n)}^2.\\
\end{split}
\end{equation}
From the ODE systems satisfied by $\E v^{n,N}$ and $\E w^{n,N}$, viewed as semi-discrete schemes for the wave equation, it is easy to show
\begin{equation}\label{numericalanalysis}
\sup_{0\leq t\leq T}\Big\{\|\E v^{n,N}(\cdot,t)-\partial_t u(\cdot,t)\|_H^2
+\|\E w^{n,N}(\cdot,t)-\partial_x u(\cdot,t)\|_H^2\Big\}\rightarrow 0
\end{equation}
as long as $n,N\rightarrow\infty$. This can be done by standard numerical analysis techniques. By Theorem \ref{finitegraph}, we have
\begin{equation}
V^{n,N}(t)\triangleq\E\|f^{n,N}_t-\E f^{n,N}_t\|_2^2\leq \|f^{n,N}_0\|_2^2 \Big\{\exp\Big(\frac{A^{n,N}t}{\|f^{n,N}_0\|_2}\Big)-1\Big\},
\end{equation}
where $A^{n,N}=2M_n d_n\sqrt{(|V_n|+|E_n|)M_n}=4\sqrt{2}n^2$. Note that by definition, $\|f^{n,N}_0\|_2^2/(n^2N^2)\rightarrow \|\varphi'\|_H^2+\|\psi\|_H^2$ as $n,N\rightarrow\infty$. Hence there exist constants $C_1,C_2>0$, such that
\begin{equation}\label{Vupperbound}
V^{n,N}(t)\leq C_1n^2N^2 (e^{C_2nt/N}-1),
\end{equation}
as long as $n$ and $N$ are sufficiently large. On the other hand, Lemma \ref{energy1} yields
\begin{equation}\label{tmp3}
V^{n,N}(t)=\E\|f^{n,N}_t\|_2^2-\|\E f_t^{n,N}\|_2^2
=\E\|f_t^{n,N}\|_2^2-\|\E f^{n,N}_0\|_2^2.
\end{equation}
Hence $V^{n,N}(0)=0$ and by Lemma \ref{L2estimate},
\begin{equation}
\begin{split}
&\frac{\d}{\d t}V^{n,N}(t)=\frac{\d}{\d t}\E\|f_t^{n,N}\|_2^2\\
&=\sum_{x\in V_n}\frac{\E|f_t^{n,N}(x)|}{m_n(x)}\Big(\sum_{y\sim x}(K_n)_{xy}\Big)+\sum_{\l x,y\r\in E_n}(K_n)_{xy}\Big(\frac{1}{m_n(x)}+\frac{1}{m_n(y)}\Big)
\E|f_t^{n,N}(\l x,y\r)|\\
&=2n\E \|f_t^{n,N}\|_1\geq 2n\| \E f_t^{n,N}\|_1.
\end{split}
\end{equation}
It is easily shown that $\|\E f^{n,N}_t\|_1/(n^2N)\rightarrow \int_{0}^{1}\Big(|\partial_t u(x,t)|+|\partial_x u(x,t)|\Big)\d x$ as $n,N\rightarrow\infty$, and the convergence is uniform for $t\in[0,T]$. Since it is assumed that $(\varphi')^2+\psi^2$ is not always zero, the integral $\int_{0}^{1}\Big(|\partial_t u(x,t)|+|\partial_x u(x,t)|\Big)\d x$ uniformly bounded away from zero for all $t\in\R_+$. Hence for any fixed $t$, there exists a constant $C_3>0$, such that
\begin{equation}\label{Vlowerbound}
V^{n,N}(t)=\int_{0}^{t}\frac{\d}{\d s}V^{n,N}(s) \d s
\geq \int_{0}^{t}2n\| \E f_s^{n,N}\|_1\d s
\geq C_3n^3Nt,
\end{equation}
as long as $n$ and $N$ are sufficiently large.

\subsection{Case 1: $N/n\rightarrow\infty$}
From （\ref{Vupperbound}） it becomes obvious that $\sup_{0\leq t\leq T}(n^2N^2)^{-1}V^{n,N}(t)\rightarrow 0$ when $N/n\rightarrow\infty$. So
\begin{equation}
\sup_{0\leq t\leq T}Err^{n,N}(t)\rightarrow 0
\end{equation}
follows from （\ref{errnN}）, （\ref{numericalanalysis}） and （\ref{Vupperbound}）. Note that
\begin{equation}
\begin{split}
& \sup_{0\leq t\leq T}\|u^{n,N}(x,t)-u(x,t)\|_H^2
=\sup_{0\leq t\leq T}\int_0^1|u^{n,N}(x,t)-u(x,t)|^2\d x \\
& =\sup_{0\leq t\leq T}\int_0^1\Big|\int_0^t \Big(v^{n,N}(x,s)-\partial_t u(x,s)\Big)\d s\Big|^2\d x\\
& \leq \sup_{0\leq t\leq T}\int_0^1 t\d x\int_0^t \Big|v^{n,N}(x,s)-\partial_t u(x,s)\Big|^2\d s\\
& \leq T\int_0^T \d t\int_0^1 \Big|v^{n,N}(x,t)-\partial_t u(x,t)\Big|^2\d x.\\
\end{split}
\end{equation}
The expectation of the upper bound above is
\begin{equation}
T\int_0^T \E\|v^{n,N}(\cdot,t)-\partial_t u(\cdot,t)\|_H^2\d t
\leq T\int_0^T Err^{n,N}(t)\d t.
\end{equation}
Hence $N/n\rightarrow\infty$ leads to
\begin{equation}
\E\Big(\sup_{0\leq t\leq T}\|u^{n,N}(\cdot,t)-u(\cdot,t)\|_H^2\Big)\rightarrow 0.
\end{equation}

\subsection{Case 2: $N/n\rightarrow C\in (0,+\infty)$}
\subsubsection{Reduction to special $\eta$}
From (\ref{Vupperbound}) and (\ref{Vlowerbound}) we see that in this case, the sequence $\{(n^2N^2)^{-1}V^{n,N}(t)\}$ is bounded both from above and below, by two positive quantities. So is $\{Err^{n,N}(t)\}$.

For any $\eta\in H$, $T\geq 0$,
\begin{equation}
\begin{split}
& \sup_{0\leq t\leq T}[u^{n,N}(\cdot,t)-u(\cdot,t),\eta(\cdot)]_H^2
=\sup_{0\leq t\leq T}\Big(\int_0^t [v^{n,N}(\cdot,s)-\partial_t u(\cdot,s),\eta(\cdot)]_H\d s\Big)^2\\
&\leq T\sup_{0\leq t\leq T}\int_0^t [v^{n,N}(\cdot,s)-\partial_t u(\cdot,s),\eta(\cdot)]_H^2\d s\\
&\leq T\int_0^T [v^{n,N}(\cdot,t)-\partial_t u(\cdot,t),\eta(\cdot)]_H^2\d t.\\
\end{split}
\end{equation}
By taking expectation on both sides, we have
\begin{equation}\label{tmp}
\E\Big(\sup_{0\leq t\leq T}[u^{n,N}(\cdot,t)-u(\cdot,t),\eta(\cdot)]_H^2\Big)
\leq T\int_0^T \E[v^{n,N}(\cdot,t)-\partial_t u(\cdot,t),\eta(\cdot)]_H^2\d t.
\end{equation}

Note that
\begin{equation}
0\leq \E[v^{n,N}(\cdot,t)-\partial_t u(\cdot,t),\eta(\cdot)]_H^2\leq \|\eta\|_H^2Err^{n,N}(t)
\end{equation}
by Cauchy-Schwartz inequality.
Also, as $N,n\rightarrow\infty$ and $N/n\rightarrow C\in (0,+\infty)$, (\ref{errnN}), (\ref{numericalanalysis}) and (\ref{Vupperbound}) imply that for any fixed $T>0$, the array $\{Err^{n,N}(t)\}_{n,N}$ is uniformly bounded on $[0,T]$.
If we can show that
\begin{equation}\label{tmp1}
\E[v^{n,N}(\cdot,t)-\partial_t u(\cdot,t),\eta(\cdot)]_H^2\rightarrow 0,~\forall t,~\forall\eta\in H,
\end{equation}
then the desired result in Theorem \ref{periodicivp} follows from the estimate in (\ref{tmp}) and the dominated convergence theorem. Furthermore, the uniform boundedness of $Err^{n,N}(t)$ implies that we just need to show (\ref{tmp1}) for $\eta\in\{\cos(2\pi lx)\}_{l=0}^{\infty}\cup \{\sin(2\pi lx)\}_{l=0}^{\infty}$, since they form an orthogonal basis in $H$. Below we consider a special case $\eta(x)=\cos(2\pi x)$. The proof can be simply modified for other $\eta$'s in the family mentioned above.

\subsubsection{Proof for $\eta(x)=\cos(2\pi x)$}
Define $g_n\in F(G_n)$ as follows: $g_n(k)=\cos\frac{2\pi k}{n}$, $\forall k\in V_n$; $g_n(\l x,y\r)=0$, $\forall \l x,y\r\in E_n$. We have $\L_{G_n}^2 g_n=-\lambda_n g_n$, where $\lambda_n=2n^2\Big(1-\cos\frac{2\pi}{n}\Big)>0$.

Define $\eta_n(x)=\eta(\lfloor nx\rfloor/n)$. Since $\|\eta_n-\eta\|_H^2\rightarrow 0$ and the array $\{Err^{n,N}(t)\}$ is bounded, we know that (\ref{tmp1}) is equivalent to
\begin{equation}\label{tmp2}
\E[v^{n,N}(\cdot,t)-\partial_t u(\cdot,t),\eta_n(\cdot)]_H^2\rightarrow 0,~~\forall t.
\end{equation}

The bias-variance decomposition yields
\begin{equation}
\begin{split}
&\E[v^{n,N}(\cdot,t)-\partial_t u(\cdot,t),\eta_n(\cdot)]_H^2\\
&=[\E v^{n,N}(\cdot,t)-\partial_t u(\cdot,t),\eta_n(\cdot)]_H^2
+\E[v^{n,N}(\cdot,t)-\E v^{n,N}(\cdot,t),\eta_n(\cdot)]_H^2\\
&=[\E v^{n,N}(\cdot,t)-\partial_t u(\cdot,t),\eta_n(\cdot)]_H^2
+(n^4N^2)^{-1}\E[f^{n,N}_t-\E f^{n,N}_t,g_n]_{G_n}^2.\\
\end{split}
\end{equation}

Equation (\ref{numericalanalysis}) and Cauchy-Schwartz inequality imply that the first term above converges to 0.
\textbf{Now it remains to verify that $(n^4N^2)^{-1}E^{n,N}(t)\rightarrow 0$, where}
\begin{equation}\label{nearlyfinal}
\begin{split}
E^{n,N}(t)=\E[f^{n,N}_t-\E f^{n,N}_t,g_n]_{G_n}^2
=\E[f^{n,N}_t,g_n]_{G_n}^2
-[\E f^{n,N}_t,g_n]_{G_n}^2.
\end{split}
\end{equation}

First we deal with the second term in (\ref{nearlyfinal}).
Since $\frac{\d}{\d t}\E f^{n,N}_t=\L_{G_n} (\E f^{n,N}_t)$ and $\L_G$ is skew-symmetric,
\begin{equation}
\frac{\d}{\d t}[\E f^{n,N}_t,g_n]_{G_n}^2=2[\E f^{n,N}_t,g_n]_{G_n}\cdot [\L_G\E f^{n,N}_t,g]
=-2[\E f^{n,N}_t,g_n]_{G_n}\cdot [\E f^{n,N}_t,\L_{G_n} g_n]_{G_n}.
\end{equation}
By substituting $g_n$ for $\L_{G_n}$, we have
\begin{equation}
\begin{split}
& \frac{\d}{\d t}[\E f^{n,N}_t,\L_{G_n}g_n]_{G_n}^2
=-2[\E f^{n,N}_t,\L_{G_n}g_n]_{G_n}\cdot [\E f^{n,N}_t,\L_{G_n}^2 g_n]_{G_n}\\
&=-2[\E f^{n,N}_t,\L_{G_n}g_n]_{G_n}\cdot [\E f^{n,N}_t,-\lambda_n g_n]_{G_n}
=-\lambda_n\frac{\d}{\d t}[\E f^{n,N}_t,g_n]_{G_n}^2.
\end{split}
\end{equation}
As a result,
\begin{equation}\label{firstterm}
\begin{split}
\frac{\d}{\d t}\Big([\E f^{n,N}_t,\L_{G_n}g_n]_{G_n}^2+\lambda_n[\E f^{n,N}_t,g_n]_{G_n}^2\Big)
=0.
\end{split}
\end{equation}

Now we come to the first term in Equation (\ref{nearlyfinal}).
\begin{lem}\label{finallem}
	Suppose $G=(V,E,K,m)$, $V_0=V$ and $V_1=\varnothing$. $\{f_t:t\geq 0\}$ is the IPS defined on $G$ starting from $f\in F_0(G)$. Then for any $g\in L^2(G)\cap L^{\infty}(G)$,
	\begin{equation}
	\frac{\d}{\d t} \E_f[f_t,g]_G^2
	\leq -2\E_f([f_t,g]_G\cdot[f_t,\L_G g]_G)+\|\L_G g\|_{\infty}^2\E_f\|f_t\|_1.
	\end{equation}
\end{lem}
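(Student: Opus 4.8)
The plan is to apply the generator $\A$ to the quadratic functional $\phi(f)=[f,g]_G^2$ and to read off the two terms on the right-hand side from the drift and the fluctuation parts of $\A\phi$. The starting point is the elementary identity, valid for any increment $h$ arising from a single transition $f\mapsto f+h$,
\[
\phi(f+h)-\phi(f)=2[f,g]_G\,[h,g]_G+[h,g]_G^2 ,
\]
so that $\A\phi(f)$ splits, after weighting by the transition rates, into a part linear in $[h,g]_G$ and a part quadratic in it.

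For the linear part I would show that the rate-weighted sum of increments is exactly $\L_G f$. This is the same computation that produces the master equation of Theorem \ref{diff}, and it is especially clean here because $V_0=V$ and $V_1=\varnothing$ force $\hat\delta_x=\delta_x$ for every $x$, so no endpoint terms are lost: the node-centred jumps contribute $\big(\frac{f(y)}{m_y}-\frac{f(x)}{m_x}\big)e_{xy}=\L_G f(\langle x,y\rangle)$ on each edge, and the edge-centred jumps contribute $\sum_{y\sim x}k_{xy}\,e_{xy}\cdot f(\langle x,y\rangle)=\L_G f(x)$ at each node. By linearity of $[\cdot,\cdot]_G$ the linear part equals $2[f,g]_G\,[\L_G f,g]_G$, and the skew-symmetry of $\L_G$ rewrites this as $-2[f,g]_G\,[f,\L_G g]_G$, the first term of the claimed bound.

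For the quadratic part I would compute $[h,g]_G$ for each transition type. A node jump at $x$ gives $[h,g]_G=-\sgn(f(x))\,\L_G g(x)$, hence $[h,g]_G^2=(\L_G g(x))^2$; an edge jump at $\langle x,y\rangle$ gives $[h,g]_G=\pm\big(\frac{g(x)}{m_x}-\frac{g(y)}{m_y}\big)$, hence $[h,g]_G^2=|\L_G g(\langle x,y\rangle)|^2$. Bounding each squared factor by $\|\L_G g\|_\infty^2$ and summing against the rates yields
\[
\sum_{\text{transitions}}(\text{rate})\,[h,g]_G^2\leq\|\L_G g\|_\infty^2\Big(\sum_{x\in V}\frac{|f(x)|}{m_x}+\sum_{\langle x,y\rangle\in E}k_{xy}|f(\langle x,y\rangle)|\Big)=\|\L_G g\|_\infty^2\,\|f\|_1 .
\]
Combining the two parts gives $\A\phi(f)\leq-2[f,g]_G\,[f,\L_G g]_G+\|\L_G g\|_\infty^2\|f\|_1$, and substituting $f_t$ and taking expectations through the forward equation $\frac{\d}{\d t}\E_f\phi(f_t)=\E_f(\A\phi(f_t))$ delivers the lemma.

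The algebra above is routine; the genuine obstacle is justifying the forward equation for this \emph{unbounded} functional. Since $\phi(f)=[f,g]_G^2\leq\|g\|_2^2\,\|f\|_2^2$ and $g\in L^2(G)$, the moment bound $\E_f\|f_t\|_2^2<\infty$ of Theorem \ref{moments1} supplies integrability of $\phi(f_t)$, and the hypothesis $g\in L^\infty(G)$ is exactly what keeps the increments $[h,g]_G$ uniformly bounded (indeed $\|\L_G g\|_\infty<\infty$), so the series defining $\A\phi$ converges and the quadratic part is summable. The validity of interchanging $\frac{\d}{\d t}$ with $\E_f$ then rests on the same non-explosiveness (Theorem \ref{nonexplosive}) and $L^1/L^2$ moment estimates underlying the ``routine procedure'' already invoked for Theorem \ref{diff} and Lemma \ref{L2estimate}.
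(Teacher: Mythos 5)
Your proof is correct and takes essentially the same route as the paper: apply the generator to $\phi(f)=[f,g]_G^2$, expand the square so that the linear part becomes $-2[f,g]_G[f,\L_G g]_G$ and the quadratic part is bounded by $\|\L_G g\|_{\infty}^2\|f\|_1$, then pass to expectations via the forward equation. The only cosmetic difference is that you aggregate the rate-weighted increments into $\L_G f$ and then invoke skew-symmetry, whereas the paper computes $[h,g]_G=-\sgn(\cdot)\,\L_G g(\cdot)$ transition by transition; your closing remarks on integrability and boundedness of increments simply make explicit what the paper dismisses as a ``routine procedure.''
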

\begin{proof}
	Define $\phi:F_0(G)\rightarrow\R$, $f\mapsto [f,g]_G^2$. A routine procedure will show that the forward equation $\frac{\d}{\d t}\E_f \phi(f_t)=\E_f (\A\phi(f_t))$ holds. Direct computation yields
	\begin{equation}
	\begin{split}
	&\A \phi(f)=\sum_{x\in V}\frac{|f(x)|}{m_x}\Big\{\Big[f+\sgn(f(x))\sum_{y\sim x}\delta_{yx},g\Big]_G^2-[f,g]_G^2\Big\}\\
	&+\sum_{\l x,y\r\in E}k_{xy}|f(\l x,y\r)|\Big\{\Big[f+\sgn(f(\l x,y\r)\cdot e_{xy})(\delta_x-\delta_y),g\Big]_G^2-[f,g]_G^2\Big\}\\
	&=\sum_{x\in V}\frac{|f(x)|}{m_x}\Big\{\Big([f,g]-\sgn(f(x))\L_G g(x)\Big)^2-[f,g]_G^2\Big\}\\
	&+\sum_{\l x,y\r\in E}k_{xy}|f(\l x,y\r)|\Big\{\Big([f,g]-\sgn(f(\l x,y\r)\cdot e_{xy})
	\L_G g(\l x,y\r)\cdot e_{xy}\Big)^2-[f,g]_G^2\Big\}\\
	&=-2[f,g]_G\cdot[f,\L_G g]_G
	+\sum_{x\in V}\frac{|f(x)|}{m_x}|\L_Gg(x)|^2
	+\sum_{y\sim x}
	k_{xy}|f(\l x,y\r)|\cdot |\L_G g(\l x,y\r)|^2\\
	&\leq -2[f,g]_G\cdot[f,\L_G g]_G+\|\L_G g\|_{\infty}^2\|f\|_1.
	\end{split}
	\end{equation}
\end{proof}

Taking $g=g_n$ and $g=\L_{G_n}g_n$ in Lemma \ref{finallem}, we have
\begin{equation}
\begin{split}
& \frac{\d}{\d t} \E[f^{n,N}_t,g_n]_{G_n}^2
\leq -2\E([f^{n,N}_t,g_n]_{G_n}\cdot[f^{n,N}_t,\L_{G_n} g_n]_{G_n})+\|\L_{G_n} g_n\|_{\infty}^2\E\|f^{n,N}_t\|_1,\\
& \frac{\d}{\d t} \E[f^{n,N}_t,\L_{G_n}g_n]_{G_n}^2
\leq 2\lambda_n\E([f^{n,N}_t,\L_{G_n}g_n]_{G_n}\cdot[f^{n,N}_t,g_n]_{G_n})
+\lambda_n^2\|g_n\|_{\infty}^2\E\|f^{n,N}_t\|_1,\\
\end{split}
\end{equation}
which leads to
\begin{equation}\label{secondterm}
\frac{\d}{\d t} \E\Big(\lambda_n[f^{n,N}_t,g_n]_{G_n}^2+[f^{n,N}_t,\L_{G_n}g_n]_{G_n}^2\Big)
\leq (\|\L_{G_n} g_n\|_{\infty}^2+\lambda_n^2\|g_n\|_{\infty}^2)\E\|f^{n,N}_t\|_1.
\end{equation}

On the one hand, $\|g_n\|_{\infty}\leq 1$ and
\begin{equation}
\begin{split}
& (\L_{G_n}g_n)(k)=0,~k\in V_n,\\
& (\L_{G_n}g_n)(\l k,k+1\r)=n\Big(\cos\frac{2\pi(k+1)}{n}-\cos\frac{2\pi k}{n}\Big)e_+
\end{split}
\end{equation}
holds for $n\in\Z_+$. Also, $\lim_{n\rightarrow\infty}\lambda_n=4\pi^2$. Hence there exist $C_3>0$ such that $\|\L_{G_n}g_n\|_{\infty}<C_3$ and $\lambda_n<C_3$, $\forall n$.

On the other hand,
\begin{equation}
\begin{split}
& (\E\|f^{n,N}_t\|_1)^2\leq \E\|f^{n,N}_t\|_1^2\leq (|V_n|+|E_n|)M_n \E\|f_t^{n,N}\|_2^2,~~by~(\ref{csineq})\\
&=2n^2 \E\|f_t^{n,N}\|_2^2\\
&=2n^2\Big(V^{n,N}(t)+\|\E f^{n,N}_t\|_2^2\Big),~~by~~(\ref{tmp3}).\\
\end{split}
\end{equation}
Note that $N/n\rightarrow C$. By (\ref{Vupperbound}), $\exists C_4>0$ such that $(\E\|f^{n,N}_t\|_1)^2<C_4 n^4N^2$.

Plugging these estimates in (\ref{secondterm}), $\exists C_5>0$ such that
\begin{equation}\label{secondterm1}
\frac{\d}{\d t} \E\Big(\lambda_n[f^{n,N}_t,g_n]_{G_n}^2+[f^{n,N}_t,\L_{G_n}g_n]_{G_n}^2\Big)
\leq C_5 n^2N.
\end{equation}

(\ref{firstterm}) and (\ref{secondterm1}) yield
\begin{equation}
\begin{split}
& \lambda_n E^{n,N}(t)=\lambda_n \E[f^{n,N}_t-\E f^{n,N}_t,g_n]_{G_n}^2\\
&\leq \lambda_n \E[f^{n,N}_t-\E f^{n,N}_t,g_n]_{G_n}^2+ \E[f^{n,N}_t-\E f^{n,N}_t,\L_{G_n}g_n]_{G_n}^2\\
&=\E\Big(\lambda_n[f_t^{n,N},g_n]_{G_n}^2+[f_t^{n,N},\L_{G_n}g_n]_{G_n}^2\Big)
-\Big(\lambda_n [\E f_t^{n,N},g_n]_{G_n}^2+[\E f_t^{n,N},\L_{G_n}g_n]_{G_n}^2\Big)\\
& = \int_{0}^{t}
\frac{\d}{\d s}\E\Big(\lambda_n[f_s^{n,N},g_n]_{G_n}^2+[f_s^{n,N},\L_{G_n}g_n]_{G_n}^2\Big)\d s\\
&-\int_{0}^{t} \frac{\d}{\d s}\Big(\lambda_n [\E f_s^{n,N},g_n]_{G_n}^2+[\E f_s^{n,N},\L_{G_n}g_n]_{G_n}^2\Big)\d s\\
&\leq C_5 n^2N t.
\end{split}
\end{equation}
Hence $(n^4 N^2)^{-1}E^{n,N}(t)\rightarrow 0$. This completes the proof for the case $N/n\rightarrow C$.

\subsection{Case 3: $N/n\rightarrow 0$}
Inequality (\ref{Vlowerbound}) tells us in this case, the sequence $\{(n^2N^2)^{-1}V^{n,N}(t)\}$ goes to infinity. So does $\{Err^{n,N}(t)\}$.

\section*{Acknowledgment}
This work was initiated during the author's undergraduate study at Peking University. The author thanks Professor Mykhaylo Shkolnikov for continuous support, enlightening suggestions and discussions, and Dongliang Zhang for insightful conversations. The author's research is supported by Gordon Y. S. Wu Fellowship in Engineering, Princeton University.


\end{document}